\newtheorem{remark}{Remark}
\newtheorem{definition}{Definition}
\newtheorem{corollary}{Corollary}
\newtheorem{lemma}{Lemma}
\newtheorem{proposition}{Proposition}
\numberwithin{equation}{section}
\title{\large A hybrid isogeometric and finite element method: NURBS-enhanced finite element method for hexahedral meshes (NEFEM-HEX)}
\author{
  \small Duygu Sap\thanks{Corresponding author: \href{mailto:duygu.sap@warwick.ac.uk}{duygu.sap@warwick.ac.uk}}%
  \thanks{The substantial portion of this work was carried out while the author was affiliated with the University of Oxford.}
}
\affil{\small CAMaCS, Mathematics Institute, University of Warwick,\\ Coventry, CV4 7AL, United Kingdom.}
\date{}
\begin{document}

\maketitle
\begin{abstract}
In this paper, we present a NURBS-enhanced finite element method that integrates the NURBS-based boundary representation of a geometric domain into a standard finite element framework for hexahedral meshes. We decompose an open, bounded, convex three-dimensional domain with a NURBS boundary into two parts, define NURBS-enhanced finite elements over the boundary layer, and use piecewise-linear Lagrange finite elements in the interior region. We introduce a special quadrature rule and a stable interpolation operator for the NURBS-enhanced elements. We discuss how the h-refinement in finite element analysis and the knot insertion in isogeometric analysis can be utilized in the refinement of the NURBS-enhanced elements. To illustrate an application of our methodology, we utilize a generic weak formulation of a second-order linear elliptic boundary value problem and derive a priori error estimates in the $H^{1}$ norm.  In addition, we use the Poisson problem as a model problem and provide numerical results that support the theoretical results. The proposed methodology combines the efficiency of finite element analysis with the geometric precision of NURBS, and may enable more accurate and efficient simulations over complex geometries. 
\end{abstract}
\noindent \textbf{Keywords:} NEFEM; Hexahedral meshes; NURBS; Hybrid methods; Curved domains; Second-order linear elliptic PDEs.

\maketitle\section{Introduction}
Numerical methods that offer accurate and exact geometric representations of the computational domain are crucial in solving real-world problems as the majority of engineering problems require working with domains that have curved or complex boundaries.
The standard finite element technique used in domains with curved boundaries is the isoparametric finite element method\cite{brenner}. The isoparametric finite element method utilizes the piecewise polynomial functions that can be used to parametrize curved boundaries in analysis. Although they do not represent the exact geometry of the physical domain, they yield high-order approximations mitigating the geometric error \cite{neilan2023stable}. They were first introduced by Irons et al \cite{irons} in two dimensions. Lenoir, then, described a practical procedure for the triangulation of arbitrary n-dimensional, regular, bounded domains by isoparametric simplicial elements and derived general error estimates for the finite element solutions of second-order elliptic problems\cite{lenoir}. There is a vast amount of publications that involve isoparametric finite  elements as of today. \cite{ciarlet1972, scott_1973, zlamal} are among the classical works on isoparametric finite elements. Nevertheless, polynomials have limited geometric representation power, and the approximate geometry defined by isoparametric finite element methods may cause a significant loss in the overall accuracy of the method. Thus, in many applications, geometric errors introduced by the isoparametric mappings deteriorate the accuracy of the numerical solution, and an exact representation of the geometry is necessary for achieving optimal accuracy for a given spatial discretization \cite{hughes}. 
In the p-version of the finite element method \cite{duster}, a coarse mesh with elements that usually have large aspect ratios and can represent the boundary exactly are utilized. The mesh remains unchanged throughout the analysis while the polynomial order of the approximation is increased in order to accurately approximate the solution. This may result in an oscillatory behavior near discontinuities -- which is described as the Runge's phenomenon\cite{runges}. Also, the bijectivity of the geometric map may be very sensitive to the geometric location of the nodes for $p\ge 2$; therefore, in practice, finite element meshing tools do not offer geometric approximations higher than cubic order if the nodes are regularly distributed, and low-order finite elements such as linear, quadratic, or cubic finite elements are widely used when a regular node distribution is preferred \cite{ciarlet1972, lenoir}. \\ 
\indent Unlike polynomials, Non-Uniform Rational B-splines (NURBS) can be used to precisely represent and design both standard analytic shapes, such as conics, quadrics, surfaces of revolution, and free-form curves and surfaces \cite{nurbssurvey}. 
Isogeometric analysis (IGA), which is introduced by Hughes et al \cite{hughes} as an alternative to finite element analysis (FEA), integrates the NURBS boundary representation into the isoparametric finite element approach by using the NURBS bases used for the geometry in analysis. Thus, it enables the use of the exact representations of curved boundaries. It also prohibits potential oscillations near discontinuities observed in p-FEM via the variation-diminishing property of NURBS \cite{nurbsbk}. 
IGA primarily focuses on the exact geometric representation of the boundary. In some works on IGA, B-splines are used in analysis although NURBS are used to exactly represent the geometry. We refer the reader to \cite{kiendl2009,bazilevs2010,buffa2016,falco2011} for a short list of applications and a comprehensive introduction to IGA. IGA has been widely used in many engineering applications since its emergence. It plays a crucial role in CAD-CAE interoperability since FEA is the most commonly used analysis technique in engineering and NURBS is the standard boundary representation used in the majority of CAD software. However, the interoperability of CAD and CAE remains a challenging problem as CAD modellers provide only the parametrizations of the boundaries of geometric objects as collections of manifolds, that is, a collections of 2D objects in 3D, while
the approximation spaces need to be defined over 3D volumetric objects representing the computational domains. 
To date, various approaches have been followed to integrate CAD and FEA
\cite{precursors, cavendish, gmsolid}.
For example, in the FE-IGA approach, the surfaces generated by the Computer-Aided Geometric Design (CAGD) tools are extended to generate volumes. See \cite{fe-iga1, fe-iga2} for some applications of this approach. We remark that generating volumetric description from boundary representations is an open problem.  On the other hand, while working with the partial differential equations that admit formulations involving only boundary integrals, isogeometric boundary element method (IGABEM) can be applied. IGABEM avoids generating volumetric descriptions, thus, reduces the dimensionality of the problem from 3D to 2D. It also enables the treatment of problems in infinite exterior domains. 
However, we note that the majority of partial differential equations that come up in physics and engineering applications do not admit formulations in terms of only boundary integrals. Thus, IGABEM can be applied to only a specific set of  problems. See \cite{igabem,igabem1,igabem2}  for some applications of the IGABEM.
Despite the lack of CAD-CAE interoperability, various initiatives have been taken to integrate IGA into commercial FEA software. See \cite{hartmann2016isogeometric,khakalo2017isogeometric,rypl2012finite} for examples. Regardless, while using commercial FEA packages for IGA, generating and visualizing NURBS geometries
can be complex and is not fully-supported \cite{iga-simplified}. Such computational challenges are out of the scope of this work.\\ 
\indent NURBS-enhanced discrete element and NURBS-enhanced finite element methods (NEFEM) comprise another set of methods that utilize the NURBS representation of the boundary in analysis. These methods have been applied to various problems in fluid dynamics and contact mechanics \cite{make,corbett}. Keng-Wit presented a NURBS-enhanced discrete element method for contact dynamics applications in \cite{nedem}. Sevilla et al \cite{ruben} introduced NURBS-enhanced finite element methods (NEFEM) over triangular and tetrahedral meshes.  NEFEM enables utilizing the efficiency of finite element methods as well as the ability of NURBS to represent conic shapes exactly.  It also allows avoiding the use of 3D NURBS required by IGA completely \cite{ginnis,cheng}. \\
 \indent   In this paper, we construct a NURBS-enhanced finite element method for hexahedral meshes. 
This method comprises the piecewise-linear Lagrange finite element method in 3D and a hybrid finite element method which utilizes both the NURBS basis functions and the linear Lagrange finite element basis functions in 2D. The hybrid finite element method and the piecewise-linear Lagrange finite element method are used over different regions of the domain. We use the blending function method introduced by Gordon and Hall\cite{blend} and follow the conventional approach employed in FEA and IGA that relies on the use of a reference element and locally-defined basis functions. Thus, our approach is fundamentally different than the ones used in \cite{ruben1,ruben}. We introduce a novel quadrature rule and interpolation operator for the hybrid (that is, the NURBS-enhanced) finite elements. We assume that the mesh is shape-regular and does not have any singularities. We discuss how the h-refinement in finite element analysis and the knot insertion in isogeometric analysis can be used simultaneously over the NURBS-enhanced elements. To illustrate an application of our methodology and analyze the convergence properties of our approach, we utilize a generic weak formulation of a linear second-order elliptic boundary value problem and derive a priori error estimates in the $H^1$ norm. In addition, we present the results of numerical experiments for the Poisson problem that support the theoretical results.\\ 
\indent We note that one would still need to address some of the challenges IGA suffers from while working with NEFEM due to the involvement of NURBS within NEFEM. For example, the tensor-product nature of the basis functions prevents local mesh refinement. One approach to overcome this issue is to consider hierarchical B-Spline \cite{evans2020hierarchical} (or hierarchical NURBS\cite{hierNURBS}) basis instead of a tensor-product basis \cite{evans2020hierarchical}. Another approach is based on using T-Splines \cite{tspline,buffa2014isogeometric}, which allow for local refinement via T-junctions but result in a loss of smoothness in the approximation. Other drawbacks of NEFEM are concerned with the treatment of singularities that decreases the convergence rate as in the case of IGA and p-FEM, complicated mesh generation, and the treatment of trimmed or singular NURBS that are widely used in CAD\cite{ruben}. We assume that the mesh is shape-regular and does not have any singularities and only consider global mesh refinement. Thus, we leave these highly-technical issues out of the scope of this paper.\\   
\indent The outline of this manuscript is as follows: In Section~\ref{sec:method}, we first provide some preliminary information about B-splines and NURBS, then a detailed description of our methodology; in Section~\ref{sec:stability}, we introduce the interpolation operators and list or derive their stability and approximation properties;
in Section~\ref{sec:modelprb}, we address the Poisson problem as a model problem, derive a priori error estimates, and provide results of numerical experiments; 
in Section~\ref{sec:conclusion}, we provide an overview of our results and state prospective research directions.
\section{Methodology} \label{sec:method}
Let $\Omega \subset \mathbb{R}^3$ be an open, bounded, convex curved domain with a Lipschitz boundary and suppose its boundary, $\partial \Omega$, is described by a single NURBS patch denoted by $\mathcal{P}$. We define $\bar{\Omega}:=\Omega \cup \partial \Omega$ and let $\mathcal{T}_h$ be the hexahedral discretization of $\bar{\Omega}$, where  $h=\max\limits_{\mathcal{Q}\in \mathcal{T}_h}\{ h_{\mathcal{Q}}\}$ is the global mesh size. We classify the hexahedral elements as boundary and interior elements by defining $\mathcal{T}_h^{(b)}:=\{Q\in \mathcal{T}_h: Q\cap \partial \Omega\neq \emptyset\}$ and $\mathcal{T}_h^{(i)}:=\mathcal{T}_h\setminus \mathcal{T}_h^{(b)}$. (See Figure~\ref{fig:adjacent-elements} for a pair of adjacent boundary layer and interior elements.)  Then, we use these two sets of elements to define the boundary layer $\Omega_{\mathcal{B}}$ and interior region $\Omega_{int}$ of the domain as follows:
\begin{align*}
   &\Omega_{\mathcal{B}}:=\bigcup\limits_{\substack{Q \in \mathcal{T}_h^{(b)}}} Q, &\Omega_{int}:= \bigcup\limits_{\substack{Q \in \mathcal{T}_h^{(i)}}} Q
\end{align*}
We assume that the interface between these two regions is planar, that is, $(\Omega_{int} \cap \Omega_{\mathcal{B}})$ is a polyhedral surface although $\Omega_{\mathcal{B}}$ consists of elements with curved faces on $\partial \Omega$. 
\begin{figure}[H]
    \centering
    \includegraphics[scale=0.7]{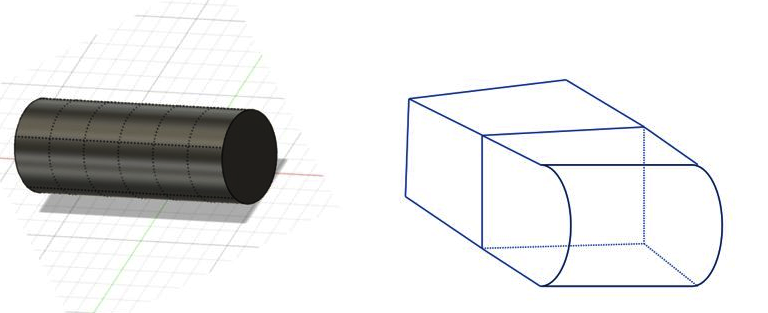}
  \caption{A cylinder (on the left) depicted with elements in $\mathcal{T}_h^b$ and $\mathcal{T}_h^i$ (on the right).}
    \label{fig:adjacent-elements}
\end{figure}
\subsection{Preliminaries}\label{sec:prelims}
In this subsection, we briefly introduce B-splines and NURBS along with the meshes associated with them. For more details, we refer the interested reader to \cite{schumaker, nurbsbk,veiga-buffa},
and \cite{farin}. 
\subsubsection{B-Splines}
B-splines are piecewise polynomial functions that satisfy certain regularity conditions. They are defined via knot vectors that partition a reference domain. By convention, their reference domain is defined as the unit interval in 1D and the unit hypercube in $d$-dimensions.\\ 
\noindent To define a B-spline of degree $p$ in 1D, we use a knot vector $\Sigma=[\eta_1,\dots, \eta_{k}]$ with components (a.k.a. \textit{knots}) satisfying $0\le \eta_{1}\le \eta_2 \le \dots \le  \eta_{k}\le 1$. 
\noindent The regularity of the B-spline at a knot $\eta_i\in \Sigma$ is given by $r_i:=p-m_i$ where $m_i\ge 1$ denotes the multiplicity of $\eta_i$ in $\Sigma$. In this manuscript, we assume that $\Sigma$ is an open knot vector, that is, $r_1 = r_{k} = p+1$, and the splines are at least continuous at the knots, that is, {$m_i \le p$}. B-spline basis functions are defined recursively via the Cox-de Boor formula\cite{deboor} as follows:
 \begin{align}\label{eq:bspline-basis}
B_i^0(\eta)&:=\begin{array}{cc}
  \{ \quad 
    \begin{array}{cc}
      1,& \eta_i\le \eta < \eta_{i+1}, \\
      0,&  otherwise
    \end{array}
\end{array}
\\
B_i^p(\eta)&=\frac{\eta- \eta_i}{\eta_{i+p}-\eta_i} B_i^{p-1}(\eta)+\frac{\eta_{i+p+1}-\eta}{\eta_{i+p+1}-\eta_{i+1}} B_{i+1}^{p-1}(\eta)\notag 
\end{align}

\noindent To define a B-spline object such as a B-spline curve or surface we need another set of parameters called the \textit{control points}.  Unlike the knots, control points lie in the physical domain. In IGA, they are treated as the equivalent of finite element nodes although splines are generally not interpolatory at the control points. From a given set of control points $\{c_i\}_{i=1}^n$ in $\mathbb{R}^3$, a B-spline curve of degree $p$ is constructed by the parametric map $C^p: [0,1]\to \mathbb{R}^3$ defined as:
$$C^p(u)= \sum\limits_{i=1}^n c_i B_{i}^p(u).$$

\subsubsection{Non-Uniform Rational B-Splines (NURBS)}
\noindent NURBS are derived from B-splines and have rational forms. In 1D, NURBS basis functions are defined via the B-spline basis functions given by \eqref{eq:bspline-basis} and a set of scalars $\{w_i\}_{i=1}^n$ called the \textit{weights}. For example, a NURBS basis function of degree $p$ is defined as follows:
\begin{align*}
R_i^p(u)&=\frac{w_i\ B_i^p(u)}{W(u)}, 
\end{align*}
\noindent where $W(u)=\sum\limits_{i=1}^n w_i B_i^p(u)$ is called the \textit{weight function}.\\ 
Similar to a B-spline curve, a NURBS curve can be generated using a given set of control points $\{c_i^w\}_{i=1}^n$ in $\mathbb{R}^3$ via the parametric map $C_{\mathcal{N}}:[0,1]\to \mathbb{R}^3$ defined as:
\begin{align*}
C_{\mathcal{N}}(u)=\sum\limits_{i=1}^n c_i^w\ R_i^p(u),
\end{align*}
where the superscript $w$ indicates the association of weights with the control points - which is one of the features of NURBS that distinguish them from B-splines.

\noindent In higher-dimensions, NURBS basis functions can be obtained via the tensor-product of the one-dimensional NURBS basis functions, as in the case of B-splines. For example, in two-dimensions, a NURBS basis function can be derived as follows:
\begin{align*}
R_{ij}^{pq}(u,v):= R_i^p(u)\otimes R_j^q(v)= \frac{w_{ij}\ B_i^p(u)\ B_j^q(v)}{\sum\limits_{i=1}^n \sum\limits_{j=1}^m w_{ij}B_i^p(u)\ B_j^q(v) },
\end{align*}
\noindent where $w_{ij}=w_i w_j$ denote the weights. A  NURBS surface associated with the control points $\{C_{ij}\}_{i,j=1}^{n,m}$ then can be defined by a parametric map $S: [0,1]^2\to \mathbb{R}^3$ as follows:
\begin{align}\label{eq:nurbs-map}
    S(u,v)=\sum\limits_{i,j=1}^{n,m} C_{ij}R_{ij}^{pq}(u,v),
\end{align}
\subsubsection{Meshes}\label{sec:meshes}
A NURBS surface can be decomposed into multiple patches - each of which is an image of a grid of rectangular elements in the reference domain under a specific NURBS map.  
A NURBS patch can be defined as a tensor product of two NURBS curves; thus, its knot net can be derived from the tensor product of the knot vectors of two curves. 
Suppose that $\mathcal{P}$ is defined via a knot-net obtained from the tensor product of the open knot vectors $K_1^o=\{\eta^1_1,\dots,\eta^1_{n_1+p_1+1} \}$ and $K_2^o=\{\eta^2_{1},\dots,\eta^2_{n_2+p_2+1} \}$, where $n_i$ and $p_i$ denote the number of control points and polynomial degree used to define the $i^{th}$ NURBS curve, respectively. Let $K_1$ and $K_2$ be the sets of non-repeating knots  (a.k.a. \textit{breaking points}) obtained from $K_1^o$ and $K_2^o$, respectively, and denote by $N_i$ the cardinality of $K_i$. \\
A \textit{Bezier mesh} $\mathcal{T}_S$ associated with these knot vectors is then defined as: 
\begin{align*}
    \mathcal{T}_S&:=\{\hat{I}_{ij}:= [\alpha_{i},\alpha_{i+1}]\otimes [\beta_{j},\beta_{j+1}]:\ \alpha_i \in K_1,\\ 
   & \beta_j\in K_2,\ |\hat{I}_{ij}| \neq \emptyset,\ i=1,\dots,N_i,\ j=1,\dots, N_j\},
\end{align*}
\noindent where $|\hat{I}_{ij}|$ denotes the measure of $\hat{I}_{ij}$.
\noindent The image of $\mathcal{T}_S$ under a NURBS parametic map $S$ defined as in \eqref{eq:nurbs-map} yields the \textit{physical Bezier mesh} over $\mathcal{P}$ defined as follows: 
$$\mathcal{T}_S^p:=\{I_{ij}: S(\hat{I}_{ij}),\ \forall \hat{I}_{ij} \in \mathcal{T}_S\}.$$

 \noindent Another important mesh associated with a NURBS surface is the \textit{Greville mesh}, which plays a crucial role in our methodology. A Greville point (a.k.a. \textit{Greville abcissae}) associated with a B-spline basis function $B_{i}^k$, where $k\in \{1,2\}$ is the dimension index, is given by
 
\begin{equation}\label{eq:grevpts}
\gamma_i^k=\frac{\eta^k_{i+1}+\dots+ \eta^k_{i+p_k}}{p_k}.
\end{equation}
\noindent $\gamma_i^k$ decomposes the identity in the $k^{th}$ direction in the B-spline basis, i.e. $\eta= \sum\limits_{i=1}^{n_k}\gamma_i^k B^k_i(\eta)$\cite{veiga-buffa}.\\
 \noindent Greville points are used to define the Greville mesh as follows: 
\begin{equation}\label{eq:greville-mesh}
    \mathcal{G}_h:=\{(\gamma_i^1,\gamma_j^2): 1\le i \le n_1,\ 1\le j\le n_2\}.
\end{equation}

 \noindent Since splines are not interpolatory, we will use the Greville points while defining the function spaces and the degrees of freedom. We assume that the multiplicity of the internal knots are less than or equal to $p_k$ in the $k^{th}$ direction, therefore, the Greville points are distinct and form a partition of the interval. Then, there exist piecewise bilinear functions $\{\phi_{ij}:=\phi_{i}^1\phi_{j}^2\}_{i,j=1}^{n_1,n_2}$ that are dual to these Greville points $\{\gamma_{mn}:=(\gamma_m^1, \gamma_n^2)\}$, that is, $\phi_{ij}(\gamma_{mn})=\phi^1_{i}(\gamma_m^1)\phi_{j}^2(\gamma_n^2)=\delta_{im}\delta_{jn}$, where $\phi^k_{i}$ indicates the dual function corresponding to the $i^{th}$ Greville point in the $k^{th}$ parametric direction, $\gamma^k_i$, and $\delta$ denotes the Kronecker delta \cite{veiga-buffa}.
 \noindent Using the functions $\{\phi_{ij}\}$, one can define the \textit{control mesh} indirectly from the Greville mesh as follows:
\begin{equation}\label{eq:control-mesh}
    \mathbb{C}:=\sum\limits_{i.j=1}^{n_1, n_2} C_{ij} \phi_{ij},\ \text{with}\ \mathbb{C}(\gamma_{ij})= C_{ij}.
\end{equation}
As \eqref{eq:control-mesh} implies, the control mesh is the piecewise linear interpolation of the control points associated with the NURBS surface. See Figure~\ref{fig:domains} for an illustration of a NURBS patch with its control mesh and knot-net on the reference domain. Moreover, if splines (or NURBS) are used to describe a function, then the control field corresponding to this function is a piecewise linear finite element function defined
on the Greville mesh \cite{veiga-buffa}. 
\begin{figure}[!t]
  \centering
    \includegraphics[scale=0.8]{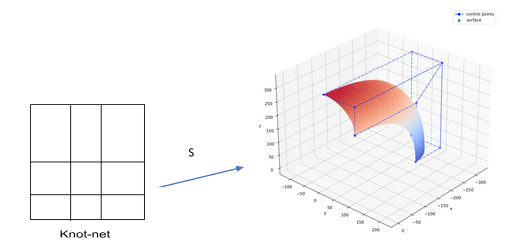}
    \caption{A NURBS surface patch with its control mesh and knot-net.}
    \label{fig:domains}
\end{figure}
\subsection{NURBS-Enhanced Finite Element Method}
Starting from this section, we proceed with a detailed description of our methodology.

\subsubsection{The Hybrid Mesh} \label{sec:hybrid_mesh}
Since we assume that $\partial \Omega$ is represented by a single NURBS patch $\mathcal{P}$, we can use a single NURBS parametrization to define $\partial \Omega$. 
We further assume that each element in $\mathcal{T}_h^{(b)}$ can at most have one face on the curved boundary, therefore, a boundary layer element can have at most five curved faces - one NURBS face and four interior curved faces each of which has an edge on $\mathcal{P}$. 
\subsubsection{Transformations}\label{sec:transforms}
\indent We construct the elements in $\mathcal{T}_h^{(i)}$ by isoparametric transformations of the reference cube $\hat{\mathcal{Q}}=[0,1]^3$ and use a rational geometric map to define 
the elements in $\mathcal{T}_h^{(b)}$. The geometric map used to construct the boundary layer elements involve the NURBS surface map that yield the boundary faces of the hexahedrals in $\Omega_{\mathcal{B}}$.

\noindent We denote by $\tilde{F}_\mathcal{Q}:\hat{\mathcal{Q}} \to \mathcal{Q}$ the local geometric transformation used in defining the boundary layer elements $\mathcal{Q}\in \mathcal{T}_h^b$.  
By scaling and translation, we define a linear transformation $T_\mathcal{Q}: [0,1]^2\to \hat{I}_\mathcal{Q}$, where $\hat{I}_\mathcal{Q}\in \mathcal{T}_S$ denotes the pre-image of $I_\mathcal{Q}\in \mathcal{T}_S^p$. Then, we define a local NURBS map $S_\mathcal{Q}:[0,1]^2\to I_\mathcal{Q}$ as the pull-back of $S$ by $T_\mathcal{Q}$, that is, $S_\mathcal{Q}:=S\circ T_\mathcal{Q}$. \\
As in \cite{intro-iga}, we assume that $S$ is a bi-Lipschitz homeomorphism such that  $S|_{\hat{I}_\mathcal{Q}^{ext}}\in C^{\infty}(\hat{I}_\mathcal{Q}^{ext})$ for all $\hat{I}_\mathcal{Q}\in \mathcal{T}_S$, where $\hat{I}_\mathcal{Q}^{ext}$ is the support extension of $\hat{I}_\mathcal{Q}$ and  $S^{-1}|_{I_\mathcal{Q}^{ext}}\in C^{\infty}(I_\mathcal{Q}^{ext})$ for all $I_\mathcal{Q}\in \mathcal{T}_S^p$, where $I_\mathcal{Q}^{ext}$ is the support extension of $I_\mathcal{Q}$.
This assumption eliminates the potential self-intersections and singularities such as those that commonly occur when a rectangular element is mapped to a curvilinear triangular element \cite{intro-iga}.\\
\noindent Let $\tilde{F}^{(1)}_\mathcal{Q}$ denote the candidate definitions of $\tilde{F}_\mathcal{Q}$ that we consider and use to derive $\tilde{F}_\mathcal{Q}$. For example, by utilizing the blending function method defined in \cite{blend}, we may define $\tilde{F}_\mathcal{Q}$ as:
\begin{align} \label{eq:blend-map}
    \tilde{F}^{(1)}_\mathcal{Q}(\hat{x})= \sum\limits_{i=1}^8 N_i(\hat{x}) X_i +f_1(\hat{x}), 
\end{align}
where $X_i$ denotes the global coordinates of the node corresponding to $\hat{x}_i=(\alpha_i,\beta_i,\zeta_i)\in [-1,1]^3$, and $f_1(\hat{x})$ denotes the face blending function defined as follows:
\begin{align*}
    f_1(\hat{x})&=\Big( S_\mathcal{Q}(\alpha,\beta)- \frac{1}{4}\big((1-\alpha)(1-\beta)X_{5}+(1+\alpha)(1-\beta)\\ X_{6}
    &+(1+\alpha)(1+\beta)X_{7}+(1-\alpha)(1+\beta)X_{8}\big)\Big)\big(\frac{1-\zeta}{2}).
\end{align*}
\noindent Here, $f_1(\hat{x})$ corresponds to the multiplication of the difference between the curved face and the planar face connecting the vertices of the face with a blending term\cite{blend}. Note that the first term in \eqref{eq:blend-map} corresponds to the standard isoparametric map for $\mathbb{Q}_1$ Lagrange elements in $\mathbb{R}^3$.
\begin{remark}
\noindent It is easy to show that 
\eqref{eq:blend-map} can be written in the following format. We refer to the blending function method for the generality and flexibility it offers. However, we will be utilizing the following format of \eqref{eq:blend-map} from \cite{corbett} as it offers an easier interpretation. 
\begin{align}\label{eq:c-map}
    \tilde{F}_{\mathcal{Q}}^{(1)}(\hat{x})= \sum\limits_{i=1}^{n_{cp}}& R_{i}(\alpha,\beta) C_{i} \frac{1}{2}(1-\zeta)+ \frac{1+\zeta}{8}\Big( (1-\alpha)(1-\beta)X_1\notag \\ &+(1-\alpha)(1+\beta)X_4+(1+\alpha)(1-\beta)X_2\notag \\ &+(1+\alpha)(1+\beta)X_3\Big),
\end{align}
\noindent where $\{R_{i}(\cdot,\cdot)\}$ are NURBS basis functions, $i:=(i_2-1)n_1+i_1$ with $n_1$ denoting the number of basis functions in the first parametric direction and  $\{i_k\}_{k=1}^2$ denoting the indices of the relevant basis function in 1D, and $n_{cp}$ is the number of control points $\{C_{i}\}$ associated with the NURBS face given by the map $S_\mathcal{Q}$.
\end{remark}
\noindent Since control points do not lie on the physical domain, their pre-images under the NURBS map are not well-defined in the reference domain. Therefore, we do not use the NURBS basis functions and the control points to define $\tilde{F}_{\mathcal{Q}}$ as in \eqref{eq:c-map}. Instead, we utilize the transformed NURBS basis functions and the transformed control points introduced in \cite{transformed-nurbs1,transformed-nurbs2} while also ensuring the reference domain for both NURBS and finite element basis functions is $[0,1]^d$ for an appropriate value of $d$ (unlike in \eqref{eq:blend-map}). Thus, we define the geometric map $\tilde{F}_{\mathcal{Q}}$ as follows:
\begin{align}\label{eq:blendedmap-transformed}
    \tilde{F}_{\mathcal{Q}}(\hat{x})= \sum\limits_{i=1}^{n_{cp}} \hat{R}_{i}(\alpha,\beta) \hat{C}_{i} (1-\zeta)+ &\zeta\Big( (1-\alpha)(1-\beta)X_1+\alpha(1-\beta)X_2& \\+\alpha \beta X_3+(1-\alpha)\beta X_4\Big), \notag
\end{align}
\noindent where $\hat{R}= T^{-1}R$ is the vector of transformed NURBS basis functions and $\hat{C}= T^t C$ is the vector of transformed control points obtained via the transformation matrix $T$ with entries  $T_{ij}:=[R_{i}(x_j)]$ where $x_{j}$ is the image of the Greville point $\gamma_{j}$ in \cite{transformed-nurbs1}. \\
\noindent Unlike the original control points, the transformed control points lie in the physical domain. In addition, the transformed NURBS basis functions and transformed control points preserve the exact geometry representation offered by the original NURBS basis functions and control points \cite{transformed-nurbs1,transformed-nurbs2}. 

\noindent Then, we define the local geometric transformation $F_{\mathcal{Q}}:\hat{\mathcal{Q}} \to \mathcal{Q}$ in a piecewise manner as follows:
\begin{align}\label{eq:geom-maps}
F_{\mathcal{Q}}(\hat{x}) := 
\begin{cases}
    \tilde{F}_{\mathcal{Q}}(\hat{x}), & \mathcal{Q} \in \mathcal{T}^{(b)}_h \\
    \bar{F}_{\mathcal{Q}}(\hat{x}), & \mathcal{Q} \in \mathcal{T}_h^{(i)}
\end{cases}
\end{align}
where $\bar{F}_{\mathcal{Q}}$ denotes the geometric map used with the $\mathbb{Q}_1$ Lagrange element (See Figure~\ref{fig:nefem_cubes}).  We assume that $det(D\bar{F}_{\mathcal{Q}}(\hat{x}))>0$ for $\forall \hat{x} \in \hat{Q}$. We then define the global geometric map $F$ such that $F|_{\mathcal{Q}}:=F_{\mathcal{Q}}$ for $\mathcal{Q}\in \mathcal{T}_h$.\\ 
We note that since the face blending term vanishes over the straight faces connecting the interior elements to the NURBS-enhanced boundary elements, the mapping is continuous over the interior face of the boundary layer and the mesh is geometrically conforming as the intersection of any boundary layer element with an interior element is a mesh face. 

\begin{figure}[H]
    \centering
    \includegraphics[scale=0.5]{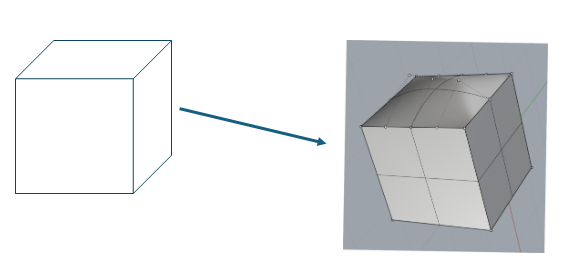}
    \caption{The reference cube (on the left), and four $\mathcal{Q}\in \mathcal{T}_h^b$ and $\mathcal{Q}\in \mathcal{T}_h^i$ (on the right).}
    \label{fig:nefem_cubes}
\end{figure}

\begin{remark}
Based on the curvature of the domain, we may choose to use fully NURBS-enhanced elements (that is, elements with six NURBS faces as the one illustrated in Figure~\ref{fig:my_hex}) in the boundary layer. The analysis in such a layer would be identical to isogeometric analysis, therefore, we would identify the elements in this layer as isogeometric elements and redefine $\Omega_\mathcal{B}$ as \enquote{the transition layer} between the isogeometric elements and the finite elements rather than \enquote{the boundary layer}.
\end{remark}
\begin{figure}[H]
    \centering
       \includegraphics[scale=0.23]{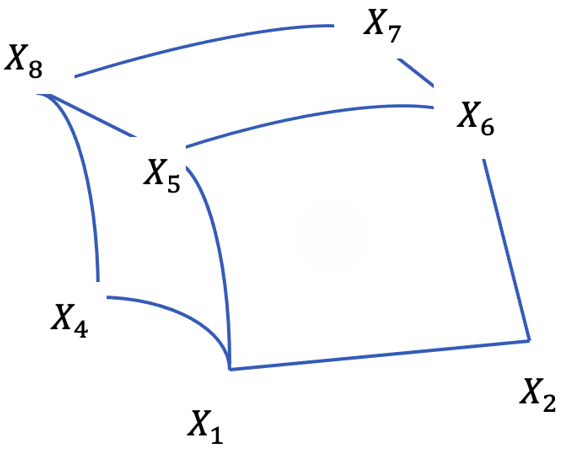}
    \caption{Fully NURBS-Enhanced element with the reference cube}
    \label{fig:my_hex}
\end{figure}
\subsection{Function Spaces}
In this section, we define the function spaces by utilizing the following definition of a \textit{finite element space} by Ciarlett \cite{ciarlet1972}.
\begin{definition}
A finite element space is a triple $(\mathcal{Q}, \mathbb{P}_\mathcal{Q}, \Sigma_{\mathcal{Q}})$ that satisfies \cite{ciarlett}:
\begin{itemize}
    \item $\mathcal{Q}\subset \mathbb{R}^n$ is a closed subset with a non-empty interior and a Lipschitz-continuous boundary, (element domain)
    \item $\mathbb{P}_\mathcal{Q}$ is a finite dimensional space of real-valued functions over $\mathcal{Q}$, (finite element space).
    \item $\Sigma_{\mathcal{Q}}$ is a finite set of linearly-independent linear forms over $C^{\infty}(\mathcal{Q})$, (degrees of freedom, a.k.a, nodes).
\end{itemize}
\noindent In the finite element theory, $\Sigma_\mathcal{Q}$ is assumed to be $\mathbb{P}_\mathcal{Q}$-unisolvent, that is, any function $p\in \mathbb{P}_\mathcal{Q}$ that nullifies $\Sigma_\mathcal{Q}$ is identically zero. Thus, any $p\in \mathbb{P}_\mathcal{Q}$ can be written as $p=\sum\limits_{i=1}^n \psi_i(p) p_i$ where $\psi_i\in \Sigma_\mathcal{Q}$ and $\{p_i\}_{i=1}^N$ are basis functions (\enquote{generally} polynomial functions) that span $\mathbb{P}_{\mathcal{Q}}$. 
\end{definition}
\noindent In alignment with Ciarlett's definition of a finite element space above, we first define the finite element spaces locally. 
Then, using these local finite element spaces, we define the global finite element spaces. Boundary conditions can be imposed on the global spaces based on the problem of interest.\\
\indent Since we construct a hybrid global finite element space that consists of $\mathbb{Q}_1$ Lagrange finite element spaces used for obtaining approximations in $\Omega_{int}$ and NURBS-enhanced finite element spaces used for obtaining approximations in $\Omega_{\mathcal{B}}$, we define two generic local finite elements. However, it suffices to verify the unisolvency condition for the NURBS-enhanced finite element spaces only. We note that our finite element construction yields interelement continuity on the global scale despite the disparity in the definition of finite element spaces at the local level.
\subsubsection{Basis functions}
\noindent We use the standard nodal basis functions for the piecewise linear finite element space and define the following set of basis functions for the NURBS-enhanced finite element space:
\begin{align}\label{eq:basis-hybrid-R-transformed}
    &\tilde{N}_i= \hat{R}_i(\alpha,\beta)(1-\zeta),\quad \quad \text{for}\ i=1,\dots, n_{cp},\\
    &\tilde{N}_{n_{cp}+1}=(1-\alpha)(1-\beta)\zeta, \notag\\
    &\tilde{N}_{n_{cp}+2}=\alpha(1-\beta)\zeta,\notag\\
    &\tilde{N}_{n_{cp}+3}=\alpha \beta \zeta,\notag \\
    &\tilde{N}_{n_{cp}+4}=(1-\alpha)\beta \zeta.\notag
\end{align}
\noindent Here, $\{\hat{R}_i\}$ are the transformed NURBS basis functions mentioned in Section~\ref{sec:meshes}, $i=n_1(i_2-1)+i_1$ denotes the index of a basis function in 2D, where $n_1$ denotes the number of basis functions in the first parametric direction and $\{i_k\}_{k=1}^2$ denote the indices of the relevant basis function in 1D as mentioned earlier.\\
\noindent The transformed basis functions are related
to Greville points in the parametric domain and the images of Greville points in the physical domain. They are interpolatory at the images of the Greville points, that is, $\hat{R}_{k}(x_m)=\delta_{km}$ where $x_{m}= S(\gamma_{m})$, and they form a partition of unity as shown in Lemma~\ref{lem:transformed-basis_p_unity}.
\begin{lemma}\label{lem:transformed-basis_p_unity}
    \begin{align*}
    \sum\limits_{k=1}^{n_{cp}} \hat{R}_k(x)=1
\end{align*}
\end{lemma}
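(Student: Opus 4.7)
The plan is to reduce the claim to the well-known partition-of-unity property of the original NURBS basis, using the linear-algebraic structure of the transformation. Let $\mathbf{R}(x) = (R_1(x),\dots,R_{n_{cp}}(x))^T$ be the column vector of original (rational) NURBS basis functions and let $\mathbf{1}$ denote the all-ones column vector of length $n_{cp}$. Since $\hat{R}_k$ is the $k$-th entry of $T^{-1}\mathbf{R}$, I would rewrite the desired sum as $\sum_{k=1}^{n_{cp}}\hat{R}_k(x) = \mathbf{1}^T T^{-1}\mathbf{R}(x)$.

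The key observation is then that $\mathbf{1}^T T^{-1} = \mathbf{1}^T$, or equivalently $\mathbf{1}^T T = \mathbf{1}^T$. Unpacking the definition $T_{ij} = R_i(x_j)$ with $x_j = S(\gamma_j)$, the $j$-th entry of $\mathbf{1}^T T$ is
\begin{equation*}
\sum_{i=1}^{n_{cp}} T_{ij} = \sum_{i=1}^{n_{cp}} R_i(x_j),
\end{equation*}
which equals $1$ by the partition-of-unity property of the original NURBS basis (this property is immediate from the definition $R_i^p = w_i B_i^p/W$ with $W=\sum_i w_i B_i^p$, and it tensorises to the 2D basis used here). Hence $\mathbf{1}^T T = \mathbf{1}^T$, and invertibility of $T$ gives $\mathbf{1}^T = \mathbf{1}^T T^{-1}$.

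Substituting back,
\begin{equation*}
\sum_{k=1}^{n_{cp}} \hat{R}_k(x) = \mathbf{1}^T T^{-1}\mathbf{R}(x) = \mathbf{1}^T \mathbf{R}(x) = \sum_{k=1}^{n_{cp}} R_k(x) = 1,
\end{equation*}
completing the argument.

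The only real subtlety is the invertibility of $T$, i.e.\ the nonsingularity of the NURBS collocation matrix at the (images of) Greville abscissae. This is not a partition-of-unity issue but a structural property of NURBS collocation; I would invoke it by citing the transformed-NURBS references \cite{transformed-nurbs1,transformed-nurbs2} already used when $\hat{R}$ was introduced, rather than reproving it. Everything else is a one-line linear algebra manipulation, so I do not anticipate any technical obstacle beyond clearly stating the row-sum identity $\mathbf{1}^T T = \mathbf{1}^T$ and using it.
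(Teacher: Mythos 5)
Your proof is correct and takes essentially the same route as the paper: both reduce the claim to the partition of unity of the original NURBS basis via the column-sum identity $\sum_i T_{ij}=\sum_i R_i(x_j)=1$, the only difference being that the paper runs the computation forward from $R_k=\sum_m T_{km}\hat{R}_m$ while you invert the row-vector identity $\mathbf{1}^T T=\mathbf{1}^T$. Your explicit flagging of the invertibility of the collocation matrix $T$ is a hypothesis the paper uses only implicitly (it is built into the definition of $\hat{R}=T^{-1}R$).
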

\begin{proof}
Using the definition of the transformed NURBS basis functions stated in \cite{transformed-nurbs1} and the partition of unity property of the NURBS basis functions, we obtain
     \begin{align*}
    \sum\limits_{k=1}^{n_{cp}} R_k(x)&=\sum\limits_{k=1}^{n_{cp}}\sum\limits_{m=1}^{n_{cp}}T_{km}\hat{R}_m(x)=\sum\limits_{m=1}^{n_{cp}}\sum\limits_{k=1}^{n_{cp}}T_{km}\hat{R}_m(x)
=\sum\limits_{m=1}^{n_{cp}}\sum\limits_{k=1}^{n_{cp}}R_k(x_m)\hat{R}_m(x),\\1&=\sum\limits_{m=1}^{n_{cp}}\Big(\sum\limits_{k=1}^{n_{cp}}R_k(x_m)\Big) \hat{R}_m(x)=\sum\limits_{m=1}^{n_{cp}}\hat{R}_m(x)
\end{align*}
\end{proof}
\noindent Moreover, the transformed NURBS basis functions span the same solution spaces as the original NURBS basis functions due to the linearity of the basis transformation\cite{transformed-nurbs1}. Although they do not strictly have local compact support, they were shown to rapidly decay from their unit peak values in \cite{transformed-nurbs1}. 
A NURBS surface described by isogeometric basis functions $\{R_i\}$ and the corresponding control points $\{C_i\}$ via a NURBS map $S$ can be described exactly by the transformed NURBS basis functions and the transformed control points, that is,
\begin{equation}
    S=\sum\limits_{i=1}^{n_{cp}} R_iC_i=\sum\limits_{i=1}^{n_{cp}} \hat{R}_i \hat{C}_i,
\end{equation}
where $\hat{C}_i:=\sum\limits_{j=1}^{n_{cp}} T_{ji}C_i$. 
Under this basis transformation, the isogeometric approximation of a scalar physical field at the image of a Greville point $\gamma_{k}$ denoted by $x_{k}$ 
is given by\cite{transformed-nurbs1}
\begin{align*}
    \hat{d}_{k}= u_h(x_{k})=\sum\limits_{i=1}^{n_{cp}} T_{ik}d_{i},\quad \text{with}\ T_{{i}{k}}=R_{i}(x_{k}),
\end{align*}
\noindent where $\{d_{\textbf{i}}\}_{i=1}^{n_{cp}}$ denote the coefficients of the isogeometric approximation corresponding to the original control points.\\ 
We use the transformed basis functions in determining the values of physical fields over the entire boundary via the Greville points,  unlike in \cite{transformed-nurbs1,transformed-nurbs2}.
\begin{figure}[H] 
\centering
\begin{minipage}[b]{0.4\textwidth}
\includegraphics[width=\textwidth]{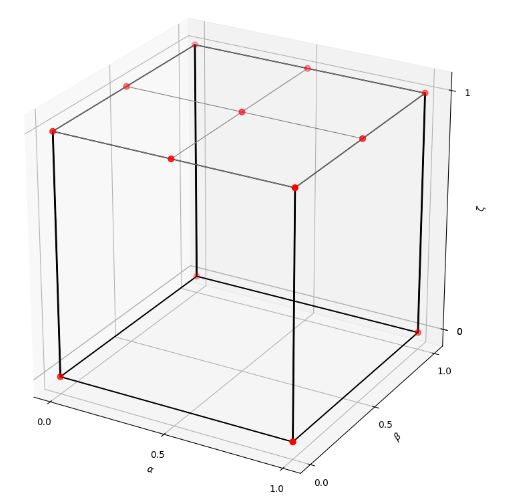}
\caption*{(a)}
\end{minipage}
\begin{minipage}[b]{0.5\textwidth}
\includegraphics[width=\textwidth]{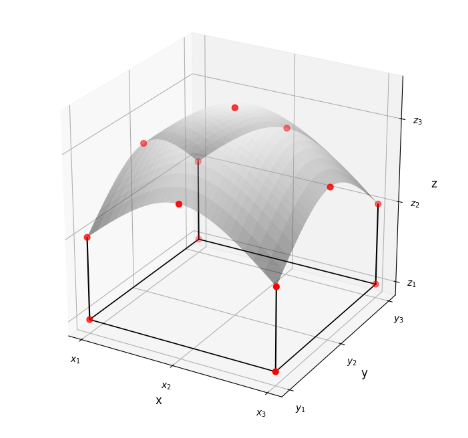}
\caption*{(b)}
\end{minipage}
\caption{Assume $p_1=p_2=2$. Degrees of freedom for $\mathcal{Q}\in\mathcal{T}_h^{b}$ are illustrated (in red) in the reference domain in (a) and in the physical domain in (b).}
\label{fig:dofs}
\end{figure}
\subsubsection{Local Finite Element Spaces}
\noindent Let $\mathbb{Q}_1(\hat{\mathcal{Q}})$ denote the piecewise-linear Lagrange finite element space defined over the reference element $\hat{\mathcal{Q}}$. Then, we define
the local finite element space over an arbitrary element $\mathcal{Q}$ in the physical subdomain $\Omega_{int}$ as
\begin{align*}
V_h^{int}(\mathcal{Q})&= \{ v : v= \hat{v}\circ \bar{F}_\mathcal{Q}^{-1}, \hat{v}\in \mathbb{Q}_1(\hat{\mathcal{Q}}) \}, \end{align*}
with the standard nodal degrees of freedom:
\begin{equation*}
S_{V_h^{int}}= \{v(a_i): a_i\in \mathcal{Q},\ 1\le i\le 8\}.
\end{equation*}

\noindent Let $V_h^b(\hat{\mathcal{Q}})$ denote the NURBS-enhanced finite element space defined over the reference element $\hat{\mathcal{Q}}$ using the basis functions listed in \eqref{eq:basis-hybrid-R-transformed}. 
\noindent Thus,  
\begin{align}
    V_h^b(\hat{\mathcal{Q}}):=span\{\tilde{N}_i: 1\le i\le n_{cp}+4\}
\end{align}
with the degrees of freedom 
\begin{align*}
S_{\hat{V}_{h}^b}&=\{\hat{v}(\hat{a}_{\textbf{i}}): \hat{a}_{\textbf{i}}\in \hat{\mathcal{Q}},\  \text{where}\
\hat{a}_i=(\hat{a}_{\textbf{i}}^1,\hat{a}_{\textbf{i}}^2, 1)\ 
\text{for}\  (n_{cp}+1)\le i\le (n_{cp} +4),\  \hat{a}_i=(\gamma_{\textbf{i}}^1,\gamma_{\textbf{i}}^2,0)\\ &
\text{for}\  1\le i\le n_{cp}\ \text{and}\
\gamma_{\textbf{i}}\in \mathcal{G}_h\},
\end{align*}
\noindent where the tuples $\{(\hat{a}_{\textbf{i}}^1,\hat{a}_{\textbf{i}}^2)\}$ denote the coordinates of the $\mathbb{Q}_1$ finite element nodes over $[0,1]^2$  and
$\mathcal{G}_h$ indicates the Greville mesh associated with the face of $\hat{\mathcal{Q}}$ that serves as the reference domain of the NURBS face of $\mathcal{Q}\in \mathcal{T}_h^b$ Figure~\ref{fig:dofs} illustrates the degrees of freedom in the reference and physical domains. For illustrative purposes, the parametric domain of the NURBS face is shown at $\zeta=1$ (instead of $\zeta=0$ as in the formulations).
\begin{lemma}\label{lem:unisolvency}
$S_{\hat{V}_h^b}$ unisolves $V_h^b(\hat{\mathcal{Q}})$. 
\end{lemma}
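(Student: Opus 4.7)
The plan is to prove unisolvency by establishing biorthogonality between the basis $\{\tilde{N}_i\}_{i=1}^{n_{cp}+4}$ and the degrees of freedom in $S_{\hat{V}_h^b}$. Since both the basis and the set of DoFs have cardinality $n_{cp}+4$, and the spanning set is finite, biorthogonality will simultaneously deliver linear independence of the $\tilde{N}_i$ and the unisolvency statement: any $v\in V_h^b(\hat{\mathcal{Q}})$ annihilated by every functional in $S_{\hat{V}_h^b}$ must vanish identically.

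The first observation I would exploit is the $\zeta$-stratification of the basis. The NURBS-type functions $\tilde{N}_i$, $1\le i\le n_{cp}$, carry the factor $(1-\zeta)$ and hence vanish on the face $\zeta=1$; the four bilinear functions $\tilde{N}_{n_{cp}+k}$, $k=1,\dots,4$, carry the factor $\zeta$ and hence vanish on the face $\zeta=0$. The DoFs are arranged to match: Greville-point evaluations $(\gamma^1_{\mathbf{i}},\gamma^2_{\mathbf{i}},0)$ lie on the first face, vertex evaluations $(\hat{a}^1_{\mathbf{i}},\hat{a}^2_{\mathbf{i}},1)$ lie on the second. Writing $v=\sum c_k \tilde{N}_k$ and enforcing $v(\hat{a}_i)=0$ therefore decouples into two independent blocks.

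For the $\zeta=1$ block I would observe that $\tilde{N}_{n_{cp}+1},\dots,\tilde{N}_{n_{cp}+4}$ restrict on this face to the standard bilinear nodal basis on $[0,1]^2$, evaluated at the four Lagrange corner nodes; the classical $\mathbb{Q}_1$ unisolvency then gives $c_{n_{cp}+k}=v(\hat{a}_{n_{cp}+k})$ for $k=1,\dots,4$, i.e.\ an identity block. For the $\zeta=0$ block, I would reduce the problem to invertibility of the matrix $M_{ki}=\hat{R}_k(\gamma^1_{\mathbf{i}},\gamma^2_{\mathbf{i}})$. This is where the defining property of the transformed basis enters: because $\hat{R}=T^{-1}R$ with $T_{ij}=R_i$ evaluated at the point associated with $\gamma_j$, a direct computation yields $\hat{R}_k(\gamma_m)=\sum_j(T^{-1})_{kj}T_{jm}=\delta_{km}$, so $M$ is the identity. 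Combining the two blocks gives $c_k=v(\hat{a}_k)$ for every $k$, whence $v\equiv 0$ whenever all DoFs vanish.

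The main obstacle I anticipate is bookkeeping around the interpolation property cited from the transformed-NURBS literature. Those references state the interpolation $\hat{R}_k(x_m)=\delta_{km}$ at the physical images $x_m=S(\gamma_m)$, whereas the decoupled block for $\zeta=0$ requires the analogous identity directly at the Greville points $\gamma_m$ in the parametric domain that forms the $\zeta=0$ face of $\hat{\mathcal{Q}}$. Justifying that this is the correct reading in our reference-element setting, i.e.\ that $T$ should be assembled from evaluations of the $R_i$ at the parametric Greville points so that the transformed basis is interpolatory at those same points, is the subtle bridge from the cited results to the present claim; once it is articulated, the remainder is a routine biorthogonality verification together with dimension counting.
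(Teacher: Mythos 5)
Your proof is correct and follows essentially the same route as the paper: both arguments rest on the dimension count together with the biorthogonality $\tilde{N}_i(\hat{a}_j)=\delta_{ij}$, from which the coefficients of any function annihilated by all degrees of freedom must vanish. The only difference is that the paper asserts the Kronecker-delta property outright, whereas you justify it via the $\zeta$-stratification into the $\mathbb{Q}_1$ corner block and the transformed-NURBS Greville block, correctly flagging that the transformation matrix $T$ must be read as assembled from evaluations at the parametric Greville points for the interpolation identity to hold on the reference face $\zeta=0$.
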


\begin{proof}
First, note that the dimension of $V_h^b(\hat{\mathcal{Q}})$ equals the cardinality of $S_{\hat{V}_h^b}$. Then, let $\hat{v}\in V_h^b(\hat{\mathcal{Q}})$, thus, write $\hat{v}(\hat{x})=\sum\limits_{i=1}^{n_{cp}+4} l_i \tilde{N}_i(\hat{x})$. Now, suppose that $\hat{v}(\hat{a}_j)=0$ for $\forall j\in \{1,\dots, n_{cp}+4\}$.
Since $\tilde{N}_i(a_j)=\delta_{ij}$, this yields
\begin{align*}
    \hat{v}(\hat{a}_j)&= \sum\limits_{i=1}^{n_{cp}+4} l_i \tilde{N}_i(\hat{a}_j)=l_j=0, \quad \forall j\in \{1,2,\dots,n_{cp}+4\}.
\end{align*}
Thus, $\hat{v}=0$ on $\hat{\mathcal{Q}}$.
\end{proof}

\noindent Then, we define the local NURBS-enhanced finite element space over an arbitray element $\mathcal{Q}$ in the physical subdomain $\Omega_{\mathcal{B}}$ as follows:
\begin{align*}
V_h^{b}(\mathcal{Q})&= \{ v: v= \hat{v}\circ \tilde{F}_\mathcal{Q}^{-1}, \hat{v}\in V_h^{b}(\hat{\mathcal{Q}}) \}. 
\end{align*}
\noindent Let $S_{V_{h}^b}$ be the set of degrees of freedom that correspond to the degrees of freedom listed in $S_{\hat{V}_{h}^b}$ in the physical domain. 
The invertibility of $\tilde{F}_\mathcal{Q}$ and the unisolvency of $S_{\hat{V}_{h}^b}$ implies that $S_{V_{h}^b}$ unisolves $V_h^{b}(\mathcal{Q})$.
\begin{remark}
    \noindent Since $\tilde{F}_\mathcal{Q}$ is not arbitrarily regular, a function in $H^s(\hat{\mathcal{\mathcal{Q}}})$ may not be mapped to a function in $H^s(\mathcal{\mathcal{Q}})$ under $\tilde{F}_\mathcal{Q}$, that is, having $V_h^b(\hat{\mathcal{Q}})\subset H^1(\hat{\mathcal{Q}})$ would not necessarily yield an $H^1$ space in the physical domain. 
    However, since $\tilde{F}_\mathcal{Q}$ is sufficiently regular due to our assumptions on $S$ and $S^{-1}$ in Section~\ref{sec:transforms}, we can assume that $V_h^b(\mathcal{Q})\subset H^1(\mathcal{Q})$. Furthermore, since we assume that the splines are at least continuous, below we can define $V_h^b$ such that $V_h^b\subset H^1(\Omega_{\mathcal{B}})$.
\end{remark}
\subsubsection{Global Finite Element Space}
We define the global finite element space for $\Omega_{int}$ as follows: \begin{align*}
V_h^{int}&= \{ v \in H^1(\Omega_{int}): v|_{\mathcal{Q}}\in V_h^{int}(\mathcal{Q}),\ v\ \text{is continuous at}\\ & \text{all vertices}\ a_i\ \text{in}\ \mathcal{T}_h^{(i)},\ \forall \mathcal{Q}\in \mathcal{T}_h^{(i)} \}, \end{align*}
\noindent Then, we define the global finite element space for $\Omega_{\mathcal{B}}$ as: \begin{align*}\label{eq:global-fespace}
V_h^{b}&= \{ v \in H^1(\Omega_{\mathcal{B}}): v|_{\mathcal{Q}}\in V_h^b(\mathcal{Q}),\ v\ \text{is continuous at}\\ & \text{the interior nodes}\ a_i\ \text{in}\ \mathcal{T}_h^{(b)},\ \forall \mathcal{Q}\in \mathcal{T}_h^{(b)}\}, 
\end{align*}
Thus, the global function space over $\Omega$ becomes:
\begin{equation}\label{eq:global-fespace}
    V_h:=\{v\in H^1(\Omega): v|_{\Omega_{int}}\in V_h^{int}, v|_{\Omega_{\mathcal{B}}}\in V_h^b \}
\end{equation}
\noindent And we may write $V_h:= V_h^{int}\oplus V_h^b$.
\color{black}
\subsubsection{Global Finite Element Spaces with Boundary Conditions}
To impose homogeneous Dirichlet boundary conditions on the global finite element spaces, it suffices to modify the global space defined over the boundary layer of the domain, thus, we first define
\begin{align*}
V_h^{b,0}(\mathcal{Q})&= \{ v: v= \hat{v}\circ \tilde{F}_\mathcal{Q}^{-1}, \hat{v}\in V_h^{b}(\hat{\mathcal{Q}}), v|_{\mathcal{Q}\cap \partial \Omega}=0 \}. 
\end{align*}
\noindent Then, the global finite element space for $\Omega_{\mathcal{B}}$ becomes: 
\begin{align*}
V_h^{b,0}&= \{ v \in H^1(\Omega_{\mathcal{B}}): v|_{\mathcal{Q}}\in V_h^{b,0}(\mathcal{Q}),\ v\ \text{is continuous at}\\ & \text{at the interior nodes}\ a_i\ \text{in}\ \mathcal{T}_h^{(b)},\ \forall \mathcal{Q}\in \mathcal{T}_h^{(b)}\}.
\end{align*}
\noindent This yields the following global function space over $\Omega$:
\begin{align}\label{eq:global-fespace-bc}
    V_h^0:=\{v\in H^1_0(\Omega): v|_{\Omega_{int}}\in V_h^{int}, v|_{\Omega_{\mathcal{B}}}\in V_h^{b,0} \}
\end{align}
As in earlier, we may write $V_{h,0}:= V_h^{int}\oplus V_h^{b,0}$.\\
Note that in the Galerkin formulation of isogeometric analysis, homogeneous boundary conditions can be exactly enforced by setting the control variables to zero \cite{hughes}.
\subsection{Numerical Integration}
Classical Gaussian quadrature rules consisting of $n$ points allow an exact integration of polynomials of order $(2n-1)$
\cite{gen_gauss}. 
In Galerkin-type formulations, element-wise Gaussian quadrature is optimal for standard finite element methods and has been used extensively for quadrilateral and hexahedral finite elements\cite{hughes_ni}. 
However, it has been shown that Gaussian quadrature rule is suboptimal in isogeometric analysis since it disregards the interelement continuity of the smooth spline basis functions \cite{hughes_ni,zou21,zou22}. To construct efficient quadrature rules for isogeometric analysis,  the interelement continuity levels of splines need to be taken into account. Thus, rules that involve more than a single element need to be constructed as smoothness across the element boundaries would reduce the number of degrees of freedom that would be required by the standard continuous finite elements for the same mesh. Such reductions in the number of degrees of freedom (therefore, the number of basis functions) yield reductions also in the number of quadrature points required for exact integration. Various efficient quadrature rules with reduced sets of quadrature points were presented in \cite{auricc,zou22, hughes_ni}. \\
\noindent Zou et al. \cite{zou21} have shown that Greville quadrature rules yield comparable accuracy as full Gaussian quadrature rules but are significantly more efficient than the full Gaussian quadrature in isogeometric analysis. The authors, however, also pointed out that Greville quadrature weights may be negative in regions where element sizes change abruptly. Therefore, non-uniform knot vectors may involve negative Greville quadrature weights. Negative quadrature weights may cause instability, thus, are not preferred in numerical or engineering analysis. A Gauss-Greville quadrature rule was introduced in \cite{zou22} to address these negative quadrature weight issues where they occur by means of various checks and adaptations of the quadrature rule to be employed. \\
\indent In this manuscript, our goal is to provide quadrature rules that would yield optimal accuracy for the numerical approximations of the integrals involved in the variational formulations of second-order partial differential equations. In this regard, we employ the Gauss-Legendre quadrature rule\cite{quarteroni} for the hexahedral elements in $\Omega_{int}$, and for the elements in $\Omega_{\mathcal{B}}$, we propose a new quadrature rule which blends the Greville quadrature rule introduced in \cite{zou21} and the Gauss-Legendre quadrature rule. We assume that the knot vectors forming the NURBS surfaces are uniform, thus, the Greville weights are non-negative. Multi-dimensional quadrature rules are obtained from the tensor product of suitable one-dimensional rules in both isogeometric analysis and the finite element analysis carried over tensor-product meshes.  Since the spline basis functions and the finite element basis functions we utilize are tensor products of piecewise-polynomials in one-dimension, we use one-dimensional quadrature rules to derive the two-dimensional quadrature rules that we need to utilize in the derivation of our blended quadrature rule in three-dimensions.\\ 
Recall that a 1D function $f$ can be numerically integrated over a domain $I$ by a set of tuples $\{(x_i,w_i)\}_{i=1}^n$ that satisfy:
\begin{align}\label{eq:gen-quad}
    \int\limits_{I} f(\alpha) d\alpha\approx\sum\limits_{i=1}^n f(x_i)w_i
\end{align}
where $\{x_i\}$'s are called the quadrature points and $\{w_i\}$'s are the weights associated with them. Note that the quadrature order must be sufficiently high for the integrand so that \eqref{eq:gen-quad} can be written as an equality rather than an approximation.\\
\indent The Greville  quadrature rule for B-splines uses the Greville points associated with the B-spline basis functions as the quadrature points and determines the quadrature weights by solving a linear moment fitting problem in each parametric direction\cite{zou22}. Precisely, if we let $\{x_i\}$ be the Greville points calculated as in Equation~\eqref{eq:grevpts}, then the weights $\{w_i\}$ are determined by solving the moment-fitting system of equations of the form\cite{zou22}:
\begin{align}\label{eq:quad_w}
\begin{bmatrix}
\int\limits_{I} N_1(\eta)d\eta\\
\int\limits_{I} N_2(\eta)d\eta\\
.\\
.\\
.\\
\int\limits_{I} N_n(\eta)d\eta
\end{bmatrix} =\begin{bmatrix}
 N_1(x_1)\ N_1(x_2) \dots N_1(x_n) \\
 N_2(x_1)\ N_2(x_2) \dots N_2(x_n) \\
 \dots \\
 \dots \\
 \dots\\
 N_n(x_1)\ N_n(x_2) \dots N_n(x_n) \\
\end{bmatrix}\begin{bmatrix}
w_1\\
w_2\\
.\\
.\\
.\\
w_n
\end{bmatrix}
\end{align}
where $N_i$ denotes a B-spline basis function of degree $p$ and the moments on the left-hand side can be calculated as shown in \cite{johannessen, chui2004nonstationary}.
\noindent  Therefore, the quadrature rule can exactly integrate all linear combinations of the univariate B-spline basis functions $\{N_i\}_{i=1}^n$ if the determinant of the matrix in Equation~\eqref{eq:quad_w} is non-zero \cite{zou21}.\\
\indent To ensure numerical stability, discrete Galerkin forms must be rank-sufficient under quadrature. In \cite{zou21}, it is mentioned that using the Greville points as the quadrature points yields stiffness and mass matrices that are free of rank deficiency regardless of the mesh sizes and the polynomial degrees. However, the authors also noted that even though the matrices are full rank, the system can still suffer from spurious modes due to a lack of integration accuracy. Thus, 
accurate integration of the spline basis functions is still necessary for eliminating spurious modes and obtaining a well-conditioned system and accurate results\cite{zou21}. This issue was addressed by defining a reduced Gaussian quadrature rule which yields more accurate integration than the Greville quadrature for polynomial degrees $p>2$ even with less quadrature points than the full Gaussian quadrature in \cite{zou22}, but it was also noted that for $p=2$, both the Greville quadrature and the reduced Gaussian quadrature rules yield optimal convergence. In this research, it suffices to use B-spline basis functions with $p=2$, thus, with the Greville quadrature rule, we can still obtain optimal convergence while numerically integrating functions over the surface.\\ 
\indent Note that while working with the variational formulations involving NURBS basis, one would encounter integrals of the following form
\begin{align}\label{eq:sampleint}
\int R_i(\alpha) R_j(\beta)  J_c(\alpha, \beta)
\end{align}
\noindent where $J_c(\alpha,\beta)$ is a function that involves the Jacobian (or the inverse of the Jacobian) of the geometric map and the coefficients of the partial differential equation. Since both the NURBS map and the weight function $W(\alpha, \beta)$ are piecewise smooth functions defined on the initial coarse mesh where the geometry is exactly represented and they remain unchanged during refinement, it can be assumed that $J_c(\alpha,\beta)$ and $W(\alpha,\beta)$ are constant as in \cite{hughes_ni}.  Thus, while determining the quadrature rule  for computing the integrals that involve NURBS basis functions in our formulations, we can utilize the Greville quadrature rule that can be used for evaluating integrals involving B-spline basis functions. In other words, it suffices to utilize a quadrature rule that would exactly approximate the following integral instead of the integral in~\eqref{eq:sampleint}:
\begin{align}\label{eq:sampleint_red}
\int N_i(\alpha)N_j(\beta)
\end{align}
\noindent Therefore, for the NURBS-enhanced elements, we construct a new quadrature rule by blending the Greville quadrature rule for the B-splines and the Gauss-Legendre quadrature rule in two dimensions. 
First, by scaling and translation, we redefine the domain of the geometric map as $[-1,1]^3$ since the Gauss-Legendre quadrature rule in 3D is defined over $[-1,1]^3$. (It is easy to see that $G: \mathbb{R}^3 \to \mathbb{R}^3$ such that $G(v)=2v-\vec{1}$ would transform $[0,1]^3$ into $[-1,1]^3$). 
Then, the initial procedure for constructing the new quadrature rule is as follows: 
\begin{itemize}
    \item Let $\tilde{q}=(0,0)$ be the single Gauss-Legendre quadrature point with weight $w=4$ over $[-1,1]^2$. Then, move it to the $z=1$ plane and obtain a quadrature point $q^G=(0,0,1)$, keeping $w$ unchanged.
    \item Suppose $\{\gamma_{ij}:=(\gamma_i,\gamma_j)\}\in [-1,1]^2$ are the pulled-back Greville quadrature points with weights $\{w_{ij}:=w_iw_j\}$. Then, move them to $z=-1$ plane to get the quadrature points of the form $q^{Gr}_{ij}=(\gamma_i, \gamma_j,-1)$. The weights associated with $w_{i}$ (similarly, $w_j$) can be computed by solving the system in Equation~\eqref{eq:quad_w} using the one-dimensional set of B-spline basis functions involved in the description of the NURBS-enhanced element. 
\end{itemize}
Now, we can derive the quadrature points for the NURBS-enhanced elements by weighting and scaling as follows:
\begin{align}
    \tilde{q}^{\mathcal{B}}_{k}&:=(\frac{q^G_x\sqrt{w}+q^{Gr}_xw_{i}}{\sqrt{w}+w_i}, \frac{q^G_y\sqrt{w}+q^{Gr}_yw_{j}}{{\sqrt{w}+w_j}},\frac{q^G_zw+q^{Gr}_zw_{ij}}{w+w_{ij}}),\ k=1,2,\dots,n_{cp}, \label{eq:hybrid_quad_pts-new}
\end{align}
where  $k:=(j-1)n_1+i$ and we denoted the $x$ ($y$ and $z$, resp.) component of $q^{Gr}_{ij}$ by $q^{Gr}_x$ ($q^{Gr}_y$, $q^{Gr}_z$, resp.) dropping the point indices $i$ and $j$ to simplify notation.\\
Note that the novel quadrature points also lie in $[-1,1]^3$ and they carry over the influence of the weights associated with the quadrature points originating from the opposite faces of the cube $[-1,1]^3$ in a consistently proportional and systematic manner. Finally, the weights corresponding to these quadrature points  can be computed via the general quadrature formulation~\eqref{eq:gen-quad} using the hybrid basis functions~\eqref{eq:basis-hybrid-R-transformed} as described below.\\ 
\indent Let $\hat{v}\in V_h^b(\hat{Q})$, then we may write $\hat{v}(\hat{x})=\sum\limits_{k=1}^{n_{cp}+4} \hat{v}(\hat{x}_k) \tilde{N}_k(\hat{x})$, where $\{\hat{x}_k\}$ denotes the set of nodes. This yields
\begin{align*}
    \int_{\hat{Q}} \hat{v}(\hat{x})\ d\hat{x}&=\sum\limits_{k=1}^{n_{cp}+4}\hat{v}(\hat{x}_k)\int_{\hat{Q}} \tilde{N}_k(\hat{x}) d\hat{x}
\end{align*}
Thus, to obtain a quadrature rule that would provide an exact integration of a function in $V_h^b(\hat{Q})$ over $\hat{Q}$, we need to ensure the exact integration of each $\tilde{N}_k$ over $\hat{Q}$, that is,  we need to find $\{w^{\mathcal{B}}_i\}_{i=1}^n$ such that

\begin{align}\label{eq:hybrid-quad-derive}
    \int_{\hat{Q}} \tilde{N}_k(\hat{x}) d\hat{x}=\int_{\hat{Q}_G} (\tilde{N}_k\circ G^{-1})(\tilde{x}) d\tilde{x}=
    \sum\limits_{i=1}^n \tilde{N}_k(q^{\mathcal{B}}_{i}) w^{\mathcal{B}}_i,\quad \text{for all}\ k=1,2,\dots,n_{cp+4},
\end{align}
where $\hat{Q}_G:=[-1,1]^3$, and the number of quadrature points $n:=n_{cp}$ by definition in \eqref{eq:hybrid_quad_pts-new}. We let $\hat{N}_k:=(\tilde{N}_k\circ G^{-1})$ and $m:=n_{cp}+4$ for notational simplicity, and write the overdetermined system that results from \eqref{eq:hybrid-quad-derive} in matrix form as follows:
\begin{align}\label{eq:quad_w_hybrid_overdetermined}
\begin{bmatrix}
\int\limits_{\hat{Q}_G} \hat{N}_1(\eta)d\eta\\
\int\limits_{\hat{Q}_G} \hat{N}_2(\eta)d\eta\\
.\\
.\\
.\\
\int\limits_{\hat{Q}_G} \hat{N}_{m}(\eta)d\eta
\end{bmatrix} =\begin{bmatrix}
 \hat{N}_1(q_1^{\mathcal{B}})\ \hat{N}_1(q_2^{\mathcal{B}}) \dots \hat{N}_1(q_n^{\mathcal{B}}) \\
\hat{N}_2(q_1^{\mathcal{B}})\ \hat{N}_2(q_2^{\mathcal{B}}) \dots \hat{N}_2(q_n^{\mathcal{B}}) \\
 \dots \\
 \dots \\
 \dots\\
 \hat{N}_{m}(q_1^{\mathcal{B}})\ \hat{N}_{m}(q_2^{\mathcal{B}}) \dots \hat{N}_{m}(q_n^{\mathcal{B}}) \\
\end{bmatrix}\begin{bmatrix}
w^{\mathcal{B}}_1\\
w^{\mathcal{B}}_2\\
.\\
.\\
.\\
w^{\mathcal{B}}_n
\end{bmatrix}
\end{align}
where we need to solve for the weight vector $\textbf{w}^{\mathcal{B}}:=[w_i^{\mathcal{B}}]$. Therefore, we define 
\begin{align}\label{eq:quad_w_hybrid_least-squares_p}
\mathbf{A}:=\begin{bmatrix}
 \hat{N}_1(q_1^{\mathcal{B}})\ \hat{N}_1(q_2^{\mathcal{B}}) \dots \hat{N}_1(q_n^{\mathcal{B}}) \\
\hat{N}_2(q_1^{\mathcal{B}})\ \hat{N}_2(q_2^{\mathcal{B}}) \dots \hat{N}_2(q_n^{\mathcal{B}}) \\
 \dots \\
 \dots \\
 \dots\\
 \hat{N}_{m}(q_1^{\mathcal{B}})\ \hat{N}_{m}(q_2^{\mathcal{B}}) \dots \hat{N}_{m}(q_n^{\mathcal{B}}) \\
\end{bmatrix},\quad 
\textbf{b}:=\begin{bmatrix}
\int\limits_{\hat{Q}_G} \hat{N}_1(\eta)d\eta\\
\int\limits_{\hat{Q}_G} \hat{N}_2(\eta)d\eta\\
.\\
.\\
.\\
\int\limits_{\hat{Q}_G} \hat{N}_{m}(\eta)d\eta
\end{bmatrix} 
\end{align}
As the system $\mathbf{A}\mathbf{w}^{\mathcal{B}}= \mathbf{b}$ does not yield a unique solution $\mathbf{w}^{\mathcal{B}}$, we use the least-squares method to find the $\mathbf{w}^{\mathcal{B}}$ that minimizes $\|\mathbf{A}\mathbf{w}^{\mathcal{B}}-\mathbf{b}\|^2$. In our case,  $n_{cp}\ge 4$ since there are $(p+1)^2$ basis functions (thus, control points and Greville points) active over each boundary element. 
\begin{remark}
    Note that some of the integrals would involve the differentials of the basis functions. For the sake of generality, we only focus on the integrals of the form \eqref{eq:sampleint_red}.  However, we note that the integration accuracy can be improved for the integrals involving the gradients of the basis functions by employing a modified version of our quadrature rule particularly because our basis functions are not all polynomials.
\end{remark}
\begin{remark}\label{rem:blended_quad_4}
    The derivation of a blended quadrature rule using a $4$-point Gauss-Legendre quadrature rule rather than a $1$-point Gauss-Legendre quadrature rule can be done in a similar fashion. Such derivation would require repeating~\eqref{eq:hybrid_quad_pts-new}  for each one of the Gaussian quadrature points over the face and yield $4n$ quadrature points.
    \end{remark}
\noindent Now we show how we would numerically compute volumetric and surface integrals in the physical domain using the quadrature rules mentioned above. Let $\hat{F}:=F\circ G^{-1}$. Consider
\begin{align}\label{eq:gen_int}
    \int_{Q} f(x)\ dx &=\int_{\hat{Q}_G} f(\hat{F}(\tilde{x}))\ |J_{\hat{F}}(\tilde{x})|\ d\tilde{x}
\end{align}
\noindent where $f\in L^2(\Omega)$ is an arbitrary function.\\ \\
\eqref{eq:gen_int} can be computed numerically as follows:
\begin{align*}
    \int_{\hat{Q}_G} f(\hat{F}(\tilde{x}))\ |J_{\hat{F}}(\tilde{x})|\ d\hat{x}
    &\approx \sum\limits_{l} \hat{f}(q^{(l)}) w^{(l)} |J_{\hat{F}}(q^{(l)})|
\end{align*}
\noindent where $\hat{f}:=f\circ \hat{F}$, $(q^{(l)}, w^{(l)})$ denotes a Gauss-Legendre quadrature point and weight pair if $Q\in \mathcal{T}_h^{(i)}$ and a blended quadrature point with its corresponding weight if $Q\in \mathcal{T}_h^{(b)}$.\\

\noindent Similarly, let $\hat{S}:= S\circ g^{-1}$, where $g$ is a bijective map from $[0,1]^2$ to $[-1,1]^2$ obtained via scaling and translation, and $Q_g:=[-1,1]^2$. Then, any scalar surface integral over a NURBS boundary face is given by

\begin{equation}
    \int_{\mathcal{P}} f(s)\ ds =\int_{\hat{Q}_g} f(\hat{S}(\tilde{s}))\ |J_{\hat{S}}(\tilde{s})|\ d\tilde{s}
\end{equation}
\noindent and numerically, such an integral can be approximated as follows
\begin{align*}
 \int_{\hat{Q}_g} f(\hat{S}(\tilde{s}))\ |J_{\hat{S}}(\tilde{s})|\ d\tilde{s}
    &\approx \sum\limits_{m} \hat{f}(q^{(m)}) w^{(m)} |J_{\hat{S}}(q^{(m)})|
\end{align*}
\noindent where $\hat{f}:=f\circ \hat{S}$, and $\{(q^{(m)},w^{(m)})\}$ are the images of two-dimensional Greville quadrature points under $g$ and their associated weights.

\subsection{Refinement}
\noindent In this subsection, we describe a new refinement strategy that can be employed for refining the NURBS-enhanced elements to complement our theoretical framework. In isogeometric analysis \cite{hughes}, $\Omega$ is often described via a coarse mesh consisting of only a few elements, while the approximate solution is computed on a refined mesh. Thus, the NURBS parametrization, $S$, which is defined over a patch and the weight function, $W$, are determined over the coarsest mesh.  During the refinement of the mesh and space, control points are adjusted to keep $S$ unchanged and the weights associated with the control points are adjusted to keep $W$ unchanged. Therefore, the refinement of splines is generally based on the refinement of function spaces while the underlying knot-spans are implicitly refined\cite{bazilevs2010}.\\ 
\noindent In this paper, we use $h$-refinement for finite elements and its equivalent in IGA, namely, the knot insertion for NURBS. As described in \cite{hughes}, after a knot insertion, the number of basis functions and the number of control points are increased by one although the geometry and the parametrization of the NURBS surface are preserved. Therefore, a new set of basis functions with a new set of control points is defined in a way to preserve the continuity of the surface, since the knot insertions that result in knot repetitions would reduce the continuity. We note that we use knot insertion as a refinement strategy only for the boundary surface of the domain.\\ 
\noindent Suppose $\mathcal{T}_{h_0}$ is the coarsest mesh, and define a family of meshes $\{\mathcal{T}_h\}_{h>0}$ as described in Section~\ref{sec:method}. As we insert knots to the knot spans, we update the set of Greville points and the piecewise bilinear functions that are dual to these points.
In the interior of the domain, we do the h-refinement for h values of $\frac{1}{2}$,$\frac{1}{4}$, and $\frac{1}{8}$, respectively. Over the boundary layer, we consider a uniform refinement of the boundary, that is, we insert same number of knots in each direction. Thus, each element $I_Q\in \mathcal{T}_S^p$ is respectively subdivided into $2$, $4$ and $8$ new elements that correspond to the newly defined knot spans on the two-dimensional NURBS reference domain. This enables preserving the one-to-one correspondence between interior and exterior faces of the boundary layer elements that are opposite to each other, thus, the blending function method can be used without any modifications. After the refinement via one knot insertion in every direction of the NURBS parametric domain and the discretization parameter is halved in the interior region, we have four hexahedrals each of which has a single face on the NURBS boundary (See Figure~\ref{fig:nurbsrefe}). Each one of these hexahedrals are images of the reference cube under the map $\tilde{F}_{Q}$. \\
\noindent The tensor product construction implies that a modification of a Bezier element propagates through the entire parameter space. In knot insertion, this is an important limitation as it prohibits local refinement. In numerical simulations where physical fields may change rapidly, local refinement may be required. 
Various techniques such as T-splines \cite{tspline, borden-t}, locally refined B-splines \cite{patrizi}, hierarchical B-splines 
\cite{forsey,evans2020hierarchical} and hierarchical NURBS \cite{hierNURBS} have been developed to address this issue. However, local mesh refinement is beyond the scope of this manuscript.
\begin{figure}[ht]
    \centering
    \includegraphics[scale=0.3]{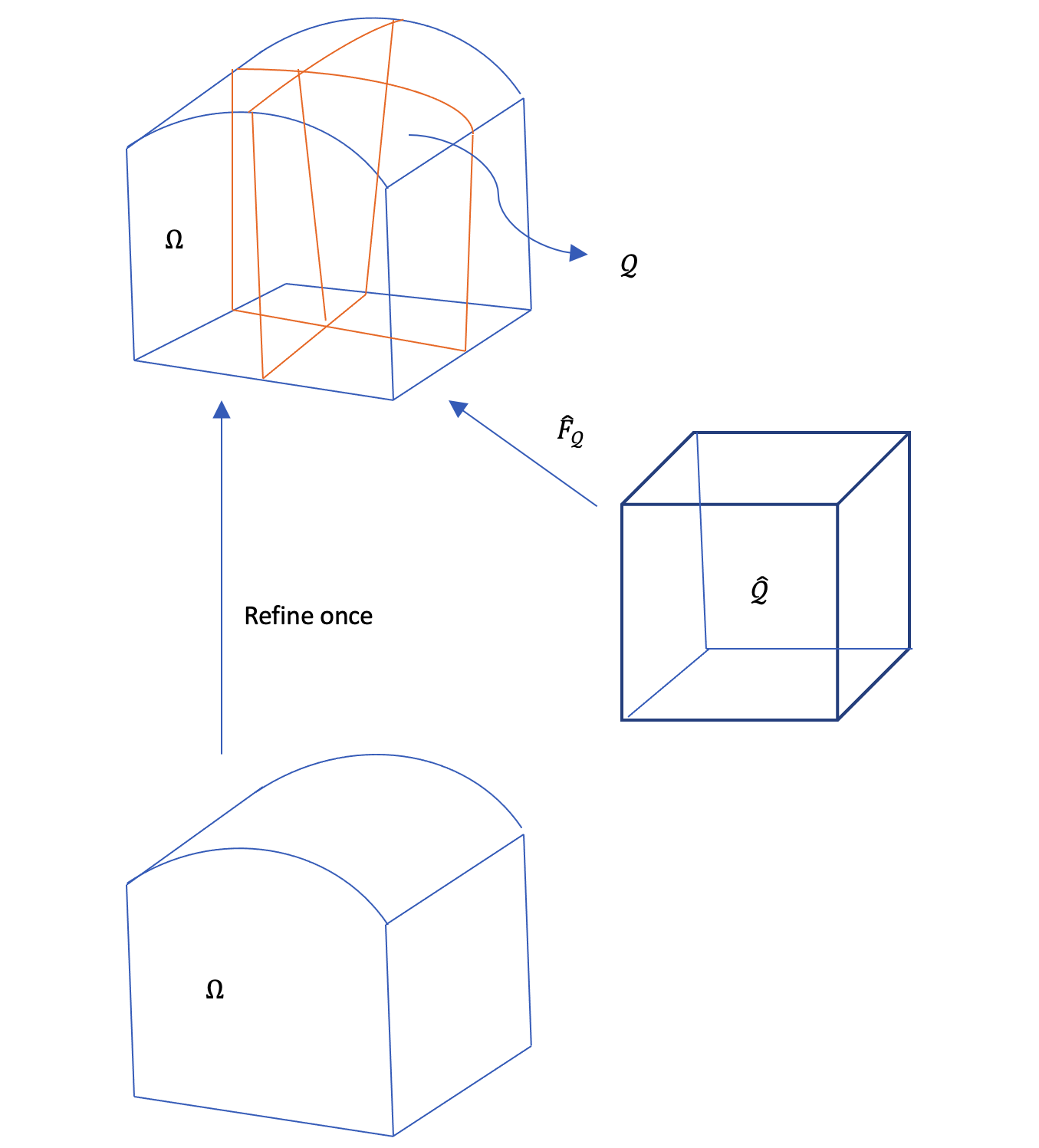}
    \caption{Refinement of a boundary layer element.}
    \label{fig:nurbsrefe}
\end{figure}
\section{Approximation properties}\label{sec:stability}
\noindent In this section, we introduce the interpolation operators and list or derive their stability and approximation properties. Interpolation error estimates play a key role in the derivation of finite element error bounds.\\
 As mentioned in the earlier sections, we focus on geometries involving a single NURBS patch, but our results can be generalized to geometries consisting of multiple patches via standard techniques.
 We employ the Lagrange interpolation operator in the interior region and construct a novel interpolant for the boundary layer by hybridizing the Lagrange interpolant with the NURBS interpolation operator defined in \cite{iga_bazilevs}. We show that our interpolants would yield optimal convergence rates.\\ \\ 
Let $h=\max\limits_{\mathcal{Q}\in \mathcal{T}_h}\{ h_{\mathcal{Q}}\}$ be the global mesh size of the family of hexahedral meshes $\{\mathcal{T}_h\}_{h>0}$ defined over
$\bar{\Omega}$ as in Section~\ref{sec:method}. We assume that $\mathcal{T}_h$ is shape-regular for every $h$, that is, the ratio of the smallest edge
of an element $\mathcal{Q}$ and its diameter,
$h_{\mathcal{Q}}$,
is uniformly bounded for $\forall \mathcal{Q}\in \mathcal{T}_h$ for every $h>0$. Thus, the ratio of the sizes of two neighboring elements is uniformly
bounded, that is, $\{\mathcal{T}_h\}_{h>0}$ is locally quasi-uniform\cite{iga_bazilevs}. 
\subsection{Interpolation Operators}
\noindent NURBS interpolation operators (and, as a result, our hybrid interpolant) are constructed from B-spline interpolation operators. Therefore, we first provide a brief overview of the B-spline interpolation operators that we will utilize. \\
\noindent Suppose $S_{p_i}(\Sigma_i)$ is the space generated by B-spline basis functions of degree $p_i$ using a knot vector $\Sigma_i$ in 1D, and let $n_i$ denote the dimension of $S_{p_i}(\Sigma_i)$. Denote by $\textbf{j}=(j_1,j_2,j_3)$ and $\textbf{m}=(m_1,m_2,m_3)$ the multi-indices that satisfy $1\le j_i,m_i\le n_i$, and $\textbf{p}=(p_1,p_2, p_3)$ the vector of polynomial degrees and let $\mathbf{\Sigma}=(\Sigma_1 \times \Sigma_2 \times \Sigma_3)$, where $\Sigma_i$ denotes the knot vector in the $i^{th}$ parametric direction. Then, we can define a 3D B-spline space as the tensor product of 1D spaces, that is, $S_{\textbf{p}}(\Sigma):=  S_{p_1}(\Sigma_1) \otimes S_{p_2}(\Sigma_2) \otimes S_{p_3}(\Sigma_3) $. Similarly, interpolation operators for $n$-dimensional B-spline spaces can be defined via the tensor product of spline interpolation operators in 1D. Therefore,  it suffices to state an explicit definition of the spline interpolation operators in 1D.\\ 
Let $\Pi_{p_i}^i: L^{2}([0,1])\to S_{p_i}(\Sigma_i)$ be a spline interpolation operator defined as $\Pi_{p_i}^i(f):=\sum\limits_{k=1}^{n_i} \lambda_{k}^{p_i}(f)B_{k}^{p_i}$, where $\lambda_{k}^{p_i}$ denotes a dual functional associated with the B-spline basis function $B_{k}^{p_i}$ and is defined as in \cite{farin}. In 3D,  for each multi-index $\textbf{j}$ defined as above, we define the dual functional associated with the B-spline basis function $\mathbf{B}_{\textbf{j}}^{\textbf{p}}:=\big(B_{j_1}^{p_1}\otimes B_{j_2}^{p_2}\otimes B_{j_3}^{p_3}\big)$ as $\mathbf{\lambda}_{\textbf{j}}^{\textbf{p}}:= \big(\lambda_{j_1}^{p_1}\otimes \lambda_{j_2}^{p_2}\otimes \lambda_{j_3}^{p_3}\big)$ such that
$\mathbf{\lambda}_{\textbf{j}}^{\textbf{p}}(\mathbf{B}_{\textbf{m}}^{\textbf{p}})=\delta_{\textbf{jm}},$
where $\delta_{\textbf{jm}}:=\delta_{j_1m_1}\delta_{j_2m_2}\delta_{j_3m_3}$ is the Kronecker delta function\cite{schumaker, veiga-buffa}. Then, the spline interpolation operator $\Pi_{\textbf{p}}$ in 3D is given by \cite{veiga-buffa, my-bgg} :
\begin{equation}\label{eq:tp-project}
    \Pi_{\textbf{p}} :=\Big(\Pi_{p_1}^1\otimes \Pi_{p_2}^2\otimes  \Pi_{p_3}^3\Big)
\end{equation}
\noindent To define the 3D NURBS interpolant that we need for the derivation of our hybrid interpolant, we first extend the parametric domain $[0,1]^2$ of the NURBS surface described in Section~\ref{sec:method} to $[0,1]^3$ by a linear extrusion along the $\zeta$-axis, and define the following NURBS space over $[0,1]^3$:
\begin{equation}\label{eq:3DNURBSpace_p}
  \mathcal{\hat{N}}_h:= span\{R_{ijk}:\ 1\le i\le n_1;\ 1\le j\le n_2 ;\ 1\le k\le n_3, \},  
\end{equation}
\noindent where $\{n_i\}_{i=1}^2$ denote the number of B-spline basis functions used in the definition of $\partial \Omega$ as before, and $n_3=2$ denotes the number of basis functions defined along the $\zeta$-direction of the 3D parametric domain $[0,1]^3$. We set the weights associated with the control points corresponding to the basis functions defined along the $\zeta$-direction equal to 1. Thus, the NURBS basis functions corresponding to these control points reduce to the B-spline basis functions that coincide with the piecewise-linear basis functions, that is, we have $R_1(\zeta)=\zeta$ and $R_2(\zeta)=(1-\zeta)$ and $\Sigma_3=\{0,0,1,1\}$.
\begin{remark}
   One may regard piecewise-linear polynomials as a special case of B-splines as can be seen in the conversion of the NURBS and B-spline bases to a Lagrangian basis in the $\zeta$-direction above. In accordance with this line of reasoning, it is possible to describe a flat surface using B-spline or NURBS basis functions, thus, represent $\Omega_{\mathcal{B}}$ as a 3D NURBS object without changing its geometry. Such an approach is implicitly employed by any CAD software that uses NURBS to represent 3D shapes with planar faces.
\end{remark}
\noindent Using the basis functions $\{R_{ijk}\}$ in \eqref{eq:3DNURBSpace_p}, we first define a boundary layer $\tilde{\Omega}_{\mathcal{B}}$ and note that 
$(\tilde{\Omega}_{\mathcal{B}}\cap \partial \Omega)$ and  $(\tilde{\Omega}_{\mathcal{B}}\cap \Omega_{int})$ are represented by identical NURBS surfaces $-$ one of which can be regarded as an offset of the other in the $\zeta$-direction. However, our boundary layer $\Omega_{\mathcal{B}}$ has the NURBS surface describe only its intersection with $\partial \Omega$. Therefore, we adjust or nullify the weights of the control points describing the offset of $\partial \Omega$, which is $\Omega_{int} \cap \tilde{\Omega}_{\mathcal{B}}$, in a way that would yield a planar interface between $\tilde{\Omega}_{\mathcal{B}}$ and $\Omega_{int}$. This results in the transformation of $\tilde{\Omega}_{\mathcal{B}}$ into 
$\Omega_{\mathcal{B}}$.\\

\noindent Then, we define the extension $S_{*}:[0,1]^3\to \Omega_{\mathcal{B}}$ of the NURBS map $S$ to 3D as follows:$$S_{*}(\alpha, \beta, \zeta)=\sum\limits_{i,j,k=1}^{n_1,n_2,n_3} R_{ijk}(\alpha,\beta,\zeta)  C_{ijk}$$
where $\{C_{ijk}\}$ denotes the set of control points used to define $\Omega_{\mathcal{B}}$ as a 3D NURBS volume. \\
We assume that $S_{*}$ is a bi-Lipschitz homeomorphism as $S$ is (See Section~\ref{sec:hybrid_mesh}).
\noindent Now, we can define the standard NURBS projection operator $\Pi_{\mathcal{\hat{N}}_h}: L^2([0,1]^3)\to \mathcal{\hat{N}}_h$ using \eqref{eq:tp-project}\cite{epfl-interpolant,veiga-buffa}:
\begin{align} \label{eq:nurbs_param_proj}
    \Pi_{\mathcal{\hat{N}}_h}v:=\frac{\Pi_{\textbf{p}}(Wv)}{W},\quad \forall v\in L^2([0,1]^3),
\end{align}
where $W$ denotes the weight function derived by using the 3D B-spline basis functions obtained from the tensor product of the B spline basis functions that span $S_1(\Sigma_3)$ and the B-spline basis functions used in the definition of $\partial \Omega$.\\
Finally, we define the NURBS space $\mathcal{N}_h$ in the physical domain as the push-forward of the NURBS space $\mathcal{\hat{N}}_h$ in \eqref{eq:3DNURBSpace_p} as follows:
\begin{equation}\label{eq:3DNURBSpace_physical}
  \mathcal{N}_h:= span\{R_{ijk}\circ S_{*}^{-1}:\ 1\le i\le n_1;\ 1\le j\le n_2 ;\ 1\le k\le n_3, \},  
\end{equation}
Then, the NURBS interpolation operator $\Pi_{\mathcal{N}_h}: L^2(\Omega_{\mathcal{B}}) \to \mathcal{N}_h$ is defined as the push-forward of $\Pi_{\mathcal{\hat{N}}_h}$\cite{epfl-interpolant,veiga-buffa}:
\begin{align}\label{eq:nurbs-interop}
    \Pi_{\mathcal{N}_h}v:=(\Pi_{\mathcal{\hat{N}}_h}(v\circ S_{*})) \circ S_{*}^{-1}.
\end{align}
\begin{remark}
  Note that $\Pi_{\mathcal{\hat{N}}_h}$ and $\Pi_{\mathcal{N}_h}$ are auxiliary operators that we define for the purpose of analysis only. We will use these operators to define the hybrid interpolation operator $\Pi_2: L^2(\Omega_{\mathcal{B}})\to V_h^b$.
\end{remark}

\begin{lemma} 
$\Pi_{\mathcal{N}_h}$ is $L^2$-stable, that is,
$\|\Pi_{\mathcal{N}_h} v \|_{L^2(\Omega_{\mathcal{B}})}  \le \|v\|_{L^2(\Omega_{\mathcal{B}})}$ for all $v\in L^2(\Omega_{\mathcal{B}})$. 
\end{lemma}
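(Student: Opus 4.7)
The plan is to transfer the estimate from the physical boundary layer $\Omega_{\mathcal{B}}$ to the parametric cube $[0,1]^3$ via the bi-Lipschitz extended map $S_{*}$, to reduce the weighted NURBS projector $\Pi_{\hat{\mathcal{N}}_h}$ to the unweighted tensor-product B-spline quasi-interpolant $\Pi_{\mathbf{p}}$ defined in \eqref{eq:tp-project}, and then to invoke the known $L^2$-stability of $\Pi_{\mathbf{p}}$. Throughout, the Lipschitz bounds on $S_{*}$ and on the weight function $W$ (continuous and strictly positive on the closed parametric cube, hence bounded between two positive constants $W_{\min}$ and $W_{\max}$) supply the uniform estimates that let everything pass between domains cleanly.

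Concretely, the first step is the change of variables
\begin{equation*}
\|\Pi_{\mathcal{N}_h} v\|_{L^2(\Omega_{\mathcal{B}})}^2 \;=\; \int_{[0,1]^3} \bigl|\Pi_{\hat{\mathcal{N}}_h}(v\circ S_{*})(\hat{x})\bigr|^2 \,|\det J_{S_{*}}(\hat{x})|\,d\hat{x},
\end{equation*}
which is legitimate by the push-forward definition \eqref{eq:nurbs-interop} and the bi-Lipschitz assumption on $S_{*}$. Then I use $\Pi_{\hat{\mathcal{N}}_h} u = \Pi_{\mathbf{p}}(Wu)/W$ from \eqref{eq:nurbs_param_proj} and bound $W$ from below and above to get
\begin{equation*}
\bigl|\Pi_{\hat{\mathcal{N}}_h}(v\circ S_{*})\bigr| \;\le\; W_{\min}^{-1}\,\bigl|\Pi_{\mathbf{p}}\bigl(W\,(v\circ S_{*})\bigr)\bigr|.
\end{equation*}

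The third step applies $L^2$-stability of the quasi-interpolant $\Pi_{\mathbf{p}}$. Each factor $\Pi_{p_i}^i$ is a dual-basis B-spline interpolation operator in one variable and is $L^2$-stable with a constant depending only on $p_i$ and the local quasi-uniformity of the knot vector $\Sigma_i$ (cf.\ \cite{schumaker, veiga-buffa}); by Fubini, the tensor-product operator is stable on $L^2([0,1]^3)$. Combined with $\|W (v \circ S_{*})\|_{L^2} \le W_{\max}\|v\circ S_{*}\|_{L^2}$ and the inverse change of variables, which reintroduces $|\det J_{S_{*}}^{-1}|$ bounded from above, this yields an estimate
\begin{equation*}
\|\Pi_{\mathcal{N}_h} v\|_{L^2(\Omega_{\mathcal{B}})} \;\le\; C\,\|v\|_{L^2(\Omega_{\mathcal{B}})},
\end{equation*}
where $C$ depends only on $W_{\max}/W_{\min}$, on the Lipschitz bounds of $S_{*}$ and $S_{*}^{-1}$, and on the polynomial degree $\mathbf{p}$; these dependencies are suppressed in the statement as is customary for interpolation stability estimates.

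The main obstacle is the $L^2$-stability of the three-dimensional tensor-product quasi-interpolant $\Pi_{\mathbf{p}}$ together with the compatibility of the dual functionals $\boldsymbol{\lambda}_{\mathbf{j}}^{\mathbf{p}}$ when the weight $W$ is inserted. The tensorization step is straightforward once one has 1D stability with a constant independent of $h$, but care is needed to ensure local support of $\boldsymbol{\lambda}_{\mathbf{j}}^{\mathbf{p}}$ on its support extension so that Fubini applies directly and so that multiplication by $W$ does not destroy boundedness — a point guaranteed by the smoothness of $W$ on each Bezier element and its global continuity on $[0,1]^3$.
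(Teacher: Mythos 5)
Your proof is correct and follows essentially the same route as the paper: pull back to the parametric domain via the bi-Lipschitz map $S_{*}$, invoke the $L^2$-stability of the tensor-product spline quasi-interpolant $\Pi_{\mathbf{p}}$ (hence of $\Pi_{\hat{\mathcal{N}}_h}$), and push forward again, absorbing the Jacobian and weight-function bounds into the constant. The only cosmetic differences are that the paper argues element by element with support extensions and then sums, whereas you work globally on $[0,1]^3$, and that you make explicit the reduction from $\Pi_{\hat{\mathcal{N}}_h}$ to $\Pi_{\mathbf{p}}$ through the bounds $W_{\min}\le W\le W_{\max}$, which the paper leaves implicit.
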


\begin{proof}
By using Lemma 3.5 in \cite{iga_bazilevs}, \eqref{eq:nurbs-interop} and referring to the $L^2$-stability of $\Pi_{\textbf{p}}$ \cite{schumaker} (and thus $\Pi_{\mathcal{\hat{N}}_h}$), we obtain
\begin{align*}
\|\Pi_{\mathcal{N}_h}v\|^2_{L^2(\mathcal{Q})} 
&\le c \|det(\nabla S_{*})\|_{L^{\infty}(\hat{\mathcal{Q}})}\|(\Pi_{\mathcal{N}_h}v)\circ S_{*}\|^2_{L^2(\hat{\mathcal{Q}})}\\
&= c \|det(\nabla S_{*})\|_{L^{\infty}(\hat{\mathcal{Q}})}\|(\Pi_{\mathcal{\hat{N}}_h}(v\circ S_{*}))\circ S_{*}^{-1}\circ  S_{*}\|^2_{L^2(\hat{\mathcal{Q}})}\\
&= c \|det(\nabla S_{*})\|_{L^{\infty}(\hat{\mathcal{Q}})}\|(\Pi_{\mathcal{\hat{N}}_h}(v\circ S_{*})\|^2_{L^2(\hat{\mathcal{Q}})}\\
&\le c \|det(\nabla S_{*})\|_{L^{\infty}(\hat{\mathcal{Q}}^{ext})}\|v\circ S_{*}\|^2_{L^2(\hat{\mathcal{Q}}^{ext})} \\
& = c \|det(\nabla S_{*})\|_{L^{\infty}(\hat{\mathcal{Q}}^{ext})} \sum\limits_{\hat{\mathcal{Q}} \in \hat{\mathcal{Q}}^{ext}}\|v\circ S_{*}\|^2_{L^2(\hat{\mathcal{Q}})}\\
& \le c \|det(\nabla S_{*})\|_{L^{\infty}(\hat{\mathcal{Q}}^{ext})} \sum\limits_{\mathcal{Q} \in \mathcal{Q}^{ext}}\|det(\nabla S_{*}^{-1})\|_{L^{\infty}(\mathcal{Q})} \|v\|^2_{L^2(\mathcal{Q})}\\
& \le c \|det(\nabla S_{*})\|_{L^{\infty}(\hat{\mathcal{Q}}^{ext})}  \|det(\nabla S_{*}^{-1})\|_{L^{\infty}(\mathcal{Q}^{ext})}\sum\limits_{\mathcal{Q} \in \mathcal{Q}^{ext}} \|v\|^2_{L^2(\mathcal{Q})}\\
& = c \|det(\nabla S_{*})\|_{L^{\infty}(\hat{\mathcal{Q}}^{ext})} \|det(\nabla S_{*}^{-1})\|_{L^{\infty}(\mathcal{Q}^{ext})}\|v\|^2_{L^2(\mathcal{Q}^{ext})}\\
& \le c\|v\|^2_{L^2(\mathcal{Q}^{ext})},
\end{align*}
where we used $\mathcal{Q}$ to denote a physical mesh element in $\mathcal{T}_h^b$ and $\hat{\mathcal{Q}}$ to denote its pre-image in the reference domain, and the superscripts \textit{ext} indicate the support extensions as in Section~\ref{sec:method}. The result follows by summing over $\mathcal{Q}\in \Omega_{\mathcal{B}}$.
\end{proof}
\begin{proposition}[Proposition 3.1\cite{epfl-interpolant}]\label{thm:epfl31}
    Given $l,s\in \mathbb{Z}$ such that $0\le l\le s\le (p+1)$ and $s\ge m$, any function $u\in H^s(\Omega_{\mathcal{B}})$ satisfies
    \begin{align*}
        \sum\limits_{\mathcal{Q}\in \mathcal{T}_{h}^b} |u- \Pi_{\mathcal{N}_h}u|^2_{H^l(\mathcal{Q})}\le Ch^{2(s-l)}\|u\|^2_{H^s(\Omega_{\mathcal{B}})}
    \end{align*}
    
\end{proposition}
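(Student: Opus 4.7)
The plan is to reduce the estimate to the parametric cube via the global NURBS pull-back $S_{*}$, apply the standard approximation theorem for the tensor-product B-spline projection $\Pi_{\mathbf{p}}$, and push the bound back to the physical domain, element by element.

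Fix $\mathcal{Q}\in \mathcal{T}_h^b$ with pre-image $\hat{\mathcal{Q}}\subset [0,1]^3$, and set $w:=u\circ S_{*}$. Using the definition $\Pi_{\mathcal{N}_h}v = (\Pi_{\hat{\mathcal{N}}_h}(v\circ S_{*}))\circ S_{*}^{-1}$ together with the chain rule for Sobolev seminorms under the bi-Lipschitz, piecewise smooth map $S_{*}$ (as in the preceding $L^2$-stability lemma, applied piecewise on the pre-image of $\mathcal{Q}$ where $S_{*}$ is smooth), one obtains a local bound of the form
\begin{align*}
|u-\Pi_{\mathcal{N}_h}u|^{2}_{H^l(\mathcal{Q})} \le C\,\|\nabla S_{*}^{-1}\|_{L^{\infty}(\mathcal{Q})}^{2l}\,\|\det \nabla S_{*}\|_{L^{\infty}(\hat{\mathcal{Q}})}\,|w-\Pi_{\hat{\mathcal{N}}_h}w|^{2}_{H^l(\hat{\mathcal{Q}})}.
\end{align*}

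Next, unfolding $\Pi_{\hat{\mathcal{N}}_h}w = \Pi_{\mathbf{p}}(Ww)/W$ and using the fact that $W$ is constructed on the coarsest parametric mesh and is therefore smooth, strictly positive, and $h$-independent on the support extension $\hat{\mathcal{Q}}^{ext}$, the Leibniz rule yields
\begin{align*}
|w-\Pi_{\hat{\mathcal{N}}_h}w|_{H^l(\hat{\mathcal{Q}})} = \left|\frac{Ww-\Pi_{\mathbf{p}}(Ww)}{W}\right|_{H^l(\hat{\mathcal{Q}})} \le C\,\|Ww-\Pi_{\mathbf{p}}(Ww)\|_{H^l(\hat{\mathcal{Q}})},
\end{align*}
with a constant depending on $\|W\|_{W^{l,\infty}(\hat{\mathcal{Q}}^{ext})}$ and $\|1/W\|_{W^{l,\infty}(\hat{\mathcal{Q}}^{ext})}$. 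The classical tensor-product B-spline approximation estimate from \cite{schumaker,veiga-buffa}, valid for $0\le l\le s\le p+1$ under the regularity assumption $s\ge m$ on the spline spaces, then gives
\begin{align*}
\|Ww-\Pi_{\mathbf{p}}(Ww)\|_{H^l(\hat{\mathcal{Q}})} \le C\,h^{s-l}\,|Ww|_{H^s(\hat{\mathcal{Q}}^{ext})} \le C\,h^{s-l}\,\|w\|_{H^s(\hat{\mathcal{Q}}^{ext})},
\end{align*}
where the last step uses the smoothness of $W$ once more.

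Transforming back to the physical element via $S_{*}$ yields $\|w\|_{H^s(\hat{\mathcal{Q}}^{ext})}\le C\,\|u\|_{H^s(\mathcal{Q}^{ext})}$, and hence $|u-\Pi_{\mathcal{N}_h}u|_{H^l(\mathcal{Q})}^{2}\le C\,h^{2(s-l)}\,\|u\|_{H^s(\mathcal{Q}^{ext})}^{2}$. Summing over $\mathcal{Q}\in \mathcal{T}_h^b$ and using the bounded-overlap property of the support extensions, which follows from shape regularity and local quasi-uniformity of $\{\mathcal{T}_h\}_{h>0}$, collapses $\sum_{\mathcal{Q}}\|u\|^{2}_{H^s(\mathcal{Q}^{ext})}$ into $C\,\|u\|^{2}_{H^s(\Omega_{\mathcal{B}})}$ and yields the claim. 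The main obstacle is the bookkeeping around the rational structure of the NURBS interpolant: the division by $W$ forces the Leibniz step above, which is only harmless because $W$ is fixed on the coarsest mesh, so its $W^{l,\infty}$ norm and that of its reciprocal do not degrade with $h$. A secondary subtle point is that $S_{*}$ is only piecewise smooth, so each reference-domain estimate must be applied on every Bezier sub-element of the support extension before being combined.
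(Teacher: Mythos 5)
The paper does not prove this proposition --- it is imported verbatim (up to notation) from Proposition~3.1 of \cite{epfl-interpolant} --- and your argument is a faithful reconstruction of the standard proof in that reference: pull-back by $S_{*}$, Leibniz handling of the $h$-independent weight $W$, the local tensor-product B-spline estimate on support extensions, and bounded-overlap summation. The only imprecisions are that for $l\ge 2$ the chain rule for $|\cdot|_{H^l(\mathcal{Q})}$ under $S_{*}$ brings in derivatives of $S_{*}^{-1}$ up to order $l$ together with lower-order seminorms of the pulled-back function (not just the factor $\|\nabla S_{*}^{-1}\|_{L^{\infty}}^{2l}$), and that the parametric mesh size appearing in the B-spline estimate must be identified with the physical $h$ via the bi-Lipschitz property of $S_{*}$; both are harmless here since every resulting term is absorbed into $\|u\|_{H^s(\mathcal{Q}^{ext})}$ before summing.
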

   \noindent In the Proposition~\ref{thm:epfl31} above, we have adjusted the notation used in the original proposition in \cite{epfl-interpolant} to our notational setting. Note that $m=1$ in our case since we consider second-order scalar elliptic PDEs as a potential application of our method. See \cite{epfl-interpolant} for detailed apriori error estimates for the IGA-based approximations of solutions of scalar elliptic PDEs of order $2m$.\\

\noindent In the domain interior, we first define the piecewise-linear Lagrange interpolant $\Pi_{1,\mathcal{Q}}: L^2(\mathcal{Q})\to V_h^{int}(\mathcal{Q})$ for every $\mathcal{Q}\in \mathcal{T}_{h}^{(i)}$. Then, we construct its global counterpart $\Pi_1:L^2(\Omega_{int})\to V_h^{int}$ by letting $\Pi_1|_\mathcal{Q}:=\Pi_{1,\mathcal{Q}} $. After that, we define its extension $\bar{\Pi}_1$ to $L^2(\Omega)$ by using only the corner points of the NURBS surface elements as the boundary nodes of every $\mathcal{Q}\in \mathcal{T}_{h}^{(b)}$. Suppose $\Pi_1^{\mathcal{B}}$ is the restriction of $\bar{\Pi}_1$ to $L^2(\Omega_{\mathcal{B}})$. 

\noindent Finally, we define the hybrid interpolant $\Pi_2: L^2(\Omega_{\mathcal{B}})\to V_h^b$ as follows: 
$$\Pi_2:= \frac{1-\tilde{\zeta}}{1+\tilde{\zeta}} \Pi_{\mathcal{N}_h}+\frac{2\tilde{\zeta}}{1+\tilde{\zeta}}  \Pi_1^{\mathcal{B}}.$$
\noindent where $\tilde{\zeta} \in (0,1)$ is a weighting parameter that depends on the shape of the domain and is expected to lie toward the lower end of this interval as the curvature of the domain boundary increases  \\  
\noindent The $L^2$-stability of $\Pi_2$ follows from the $L^2$-stability of $\Pi_1$ and $\Pi_{\mathcal{N}_h}$.
\begin{lemma}\label{lem:boundary-inter}
 Let $v\in H^1(\Omega_{\mathcal{B}})$. Then, we have $ \|v-\Pi_2 v\|_{L^2(\Omega_{\mathcal{B}})}\le ch\|v\|_{H^1(\Omega_{\mathcal{B}})}$.
\end{lemma}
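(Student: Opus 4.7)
The plan is to exploit the structural observation that the two coefficients in the definition of $\Pi_2$ form a partition of unity on $(0,1)$: indeed $\frac{1-\tilde{\zeta}}{1+\tilde{\zeta}}+\frac{2\tilde{\zeta}}{1+\tilde{\zeta}}=1$, and both are non-negative for $\tilde{\zeta}\in(0,1)$. Hence $\Pi_2$ is a convex combination of the NURBS projector $\Pi_{\mathcal{N}_h}$ and the piecewise-linear Lagrange interpolant $\Pi_1^{\mathcal{B}}$, which lets us reduce the estimate for the hybrid operator to estimates for each ingredient separately.

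First I would write
\begin{align*}
v-\Pi_2 v=\frac{1-\tilde{\zeta}}{1+\tilde{\zeta}}\bigl(v-\Pi_{\mathcal{N}_h}v\bigr)+\frac{2\tilde{\zeta}}{1+\tilde{\zeta}}\bigl(v-\Pi_1^{\mathcal{B}}v\bigr),
\end{align*}
and then take $L^2(\Omega_{\mathcal{B}})$ norms. Applying the triangle inequality and using that both weights are bounded by $1$, I obtain
\begin{align*}
\|v-\Pi_2 v\|_{L^2(\Omega_{\mathcal{B}})}\le \|v-\Pi_{\mathcal{N}_h}v\|_{L^2(\Omega_{\mathcal{B}})}+\|v-\Pi_1^{\mathcal{B}}v\|_{L^2(\Omega_{\mathcal{B}})}.
\end{align*}
For the first term I invoke Proposition~\ref{thm:epfl31} in the boundary layer with $l=0$ and $s=1$ (both admissible since $m=1$ and $p\ge 1$), yielding $\|v-\Pi_{\mathcal{N}_h}v\|_{L^2(\Omega_{\mathcal{B}})}\le C h\,\|v\|_{H^1(\Omega_{\mathcal{B}})}$. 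For the second term I would sum the elementwise Bramble--Hilbert / Deny--Lions estimate on each $\mathcal{Q}\in\mathcal{T}_h^{(b)}$: shape-regularity and local quasi-uniformity of $\{\mathcal{T}_h\}_{h>0}$ together with the regularity of $\tilde{F}_{\mathcal{Q}}$ and $\bar{F}_{\mathcal{Q}}$ give $\|v-\Pi_1^{\mathcal{B}}v\|_{L^2(\mathcal{Q})}\le Ch_{\mathcal{Q}}|v|_{H^1(\mathcal{Q})}$, and summing over $\mathcal{Q}\in\mathcal{T}_h^{(b)}$ gives $\|v-\Pi_1^{\mathcal{B}}v\|_{L^2(\Omega_{\mathcal{B}})}\le Ch\,\|v\|_{H^1(\Omega_{\mathcal{B}})}$. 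Combining the two bounds and absorbing the (bounded) convex weights into the constant yields the claim.

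The main obstacle I foresee is the regularity justification for the second ingredient: with $v\in H^1(\Omega_{\mathcal{B}})$ in three dimensions, the nodal Lagrange interpolant is not a priori well-defined on $H^1$ alone, and the clean Bramble--Hilbert estimate formally requires either slightly higher regularity or a quasi-interpolation variant (Cl\'ement / Scott--Zhang) that is constructed from local averages on the Greville-associated dual functions already introduced in Section~\ref{sec:meshes}. In practice I would either (i) tacitly interpret $\Pi_1^{\mathcal{B}}$ as such a quasi-interpolant on the corner nodes, which preserves the $L^2$-stability used here, or (ii) note that the piecewise definition via $\tilde{F}_{\mathcal{Q}}^{-1}$ together with the smoothness of $S$ and $S^{-1}$ on support extensions suffices to transfer the classical estimate from the reference cube. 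Once this technical point is handled, the rest of the argument is purely the convex-combination bookkeeping plus direct quotation of Proposition~\ref{thm:epfl31}.
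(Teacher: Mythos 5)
Your proposal is correct and follows essentially the same route as the paper's proof: the same convex-combination splitting of $v-\Pi_2 v$ using the partition-of-unity property of the two weights, bounding both weights by $1$, invoking Proposition~\ref{thm:epfl31} with $l=0$, $s=1$ for the NURBS part, and the standard nodal-interpolation estimate for the Lagrange part, summed over $\mathcal{Q}\in\mathcal{T}_h^{(b)}$. Your caveat about the nodal interpolant not being classically well-defined on $H^1$ in three dimensions is a fair technical point that the paper leaves implicit by simply citing the approximation properties of the nodal interpolant.
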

\begin{proof}
    For $\mathcal{Q}\in \mathcal{T}_h^{(b)}$, we may write
    \begin{align*}
    \|v-\Pi_2 v\|^2_{L^2(\mathcal{Q})}=& \|\frac{1-\tilde{\zeta}}{1+\tilde{\zeta}}  \Pi_{\mathcal{N}_h}v+\frac{2\tilde{\zeta }}{1+\tilde{\zeta}}\Pi_1^{\mathcal{B}}v-v\|^2_{L^2(\mathcal{Q})}\\
=&\|  \frac{1-\tilde{\zeta}}{1+\tilde{\zeta}} \Pi_{\mathcal{N}_h}v+ \frac{1-\tilde{\zeta}}{1+\tilde{\zeta}}v-\frac{1-\tilde{\zeta}}{1+\tilde{\zeta}}v+\frac{2\tilde{\zeta }}{1+\tilde{\zeta}}\Pi_1^{\mathcal{B}}v-\frac{2\tilde{\zeta }}{1+\tilde{\zeta}}v+\frac{2\tilde{\zeta }}{1+\tilde{\zeta}}v-v\|^2_{L^2(\mathcal{Q})}\\
=&\| ( \frac{1-\tilde{\zeta}}{1+\tilde{\zeta}}\Pi_{\mathcal{N}_h}v-\frac{1-\tilde{\zeta}}{1+\tilde{\zeta}}v)+(\frac{2\tilde{\zeta }}{1+\tilde{\zeta}}\Pi_1^{\mathcal{B}} v-\frac{2\tilde{\zeta }}{1+\tilde{\zeta}}v)+ \frac{1-\tilde{\zeta}}{1+\tilde{\zeta}}v+\frac{2\tilde{\zeta }}{1+\tilde{\zeta}}v-v\|^2_{L^2(\mathcal{Q})}\\
=&\|  \frac{1-\tilde{\zeta}}{1+\tilde{\zeta}}(\Pi_{\mathcal{N}_h} v-v)+\frac{2\tilde{\zeta }}{1+\tilde{\zeta}}(\Pi_1^{\mathcal{B}}v-v)\|^2_{L^2(\mathcal{Q})}\\
\le &  ( \frac{1-\tilde{\zeta}}{1+\tilde{\zeta}})^2\|\Pi_{\mathcal{N}_h}v-v\|^2+(\frac{2\tilde{\zeta }}{1+\tilde{\zeta}})^2\|\Pi_1^{\mathcal{B}} v-v\|^2_{L^2(\mathcal{Q})}\\
\le &\|\Pi_{\mathcal{N}_h}v-v\|^2+\|\Pi_1^{\mathcal{B}} v-v\|^2_{L^2(\mathcal{Q})}
\end{align*}
\color{black}
\noindent Summing over $\mathcal{Q}\in \mathcal{T}_h^{(b)}$, using Proposition~\ref{thm:epfl31} with $l=0$, $s=1$ and $p:=\min\limits_{i\in\{1,2,3\}}\{p_i\}=1$, and the approximation properties of the nodal interpolant\cite{ciarlet1972, guermond}, we obtain 
\begin{align*}
    \|v-\Pi_2 v\|^2_{L^2(\Omega_{\mathcal{B}})}
\le  c h^2 \|v\|^2_{H^1(\Omega_{\mathcal{B}})} 
\end{align*}
\end{proof}
\noindent Finally, we define the global interpolant $\Pi_h:L^2(\Omega)\to V_h$ 
\[ 
\Pi_hv(x) = 
\begin{cases} 
      \Pi_1 v(x), & \text{if } x \in \Omega_{int}, \\
      \Pi_2v(x), & \text{if } x \in \Omega_{\mathcal{B}}.
\end{cases}
\]
\noindent 
 Thus, we may write $\Pi_h := \Pi_1 \oplus \Pi_2.$\\
\noindent Letting $H^1(\Omega):=H^1(\Omega_{int})\oplus H^1(\Omega_{\mathcal{B}})$, we may write any $v\in H^1(\Omega)$ as $v:=\pi_{1} v + \pi_{2}v $, where $\pi_1$ and $\pi_2$ denote the respective projection homeomorphisms onto $H^1(\Omega_{int})$ and $H^1(\Omega_{\mathcal{B}})$ associated with the direct sum. This yields 
\begin{align*}
    \Pi_h v&= (\Pi_1\oplus \Pi_2)(\pi_1 v + \pi_2 v),\\
    &= \Pi_1(\pi_1 v) \oplus \Pi_2(\pi_2 v).
\end{align*}

\begin{corollary}\label{cor:interop-l2}
    $\Pi_h$ is $L^2$-stable on $\Omega$, and there holds $\|v-\Pi_h v\|_{L^2(\Omega)}
    \le ch\|v\|_{H^1(\Omega)}$ for all $v\in H^1(\Omega)$.
\end{corollary}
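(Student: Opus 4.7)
The plan is to exploit the piecewise definition $\Pi_h = \Pi_1 \oplus \Pi_2$ together with the fact that $\Omega_{int}$ and $\Omega_{\mathcal{B}}$ share only a polyhedral interface of measure zero in $\mathbb{R}^3$. Consequently, for any $w \in L^2(\Omega)$ the norm decomposes additively,
\begin{equation*}
\|w\|^2_{L^2(\Omega)} \;=\; \|w\|^2_{L^2(\Omega_{int})} + \|w\|^2_{L^2(\Omega_{\mathcal{B}})},
\end{equation*}
and analogously for the $H^1$ seminorm, which matches the direct-sum convention the excerpt has already introduced via $v = \pi_1 v + \pi_2 v$.

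For $L^2$-stability, I would apply this splitting to $w = \Pi_h v$ to obtain
\begin{equation*}
\|\Pi_h v\|^2_{L^2(\Omega)} \;=\; \|\Pi_1(\pi_1 v)\|^2_{L^2(\Omega_{int})} + \|\Pi_2(\pi_2 v)\|^2_{L^2(\Omega_{\mathcal{B}})}.
\end{equation*}
The standard nodal Lagrange interpolant $\Pi_1$ is $L^2$-stable on shape-regular hexahedral meshes by classical finite element theory, and $\Pi_2$ is $L^2$-stable on $\Omega_{\mathcal{B}}$ as stated immediately after its definition (a convex combination, by the triangle inequality, of the $L^2$-stable operators $\Pi_{\mathcal{N}_h}$ and $\Pi_1^{\mathcal{B}}$ with weights $\tfrac{1-\tilde\zeta}{1+\tilde\zeta}$ and $\tfrac{2\tilde\zeta}{1+\tilde\zeta}$). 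Summing the two bounds gives $\|\Pi_h v\|_{L^2(\Omega)} \le c\|v\|_{L^2(\Omega)}$.

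For the approximation estimate, I would apply the same splitting to $w = v - \Pi_h v$:
\begin{equation*}
\|v - \Pi_h v\|^2_{L^2(\Omega)} \;=\; \|\pi_1 v - \Pi_1(\pi_1 v)\|^2_{L^2(\Omega_{int})} + \|\pi_2 v - \Pi_2(\pi_2 v)\|^2_{L^2(\Omega_{\mathcal{B}})}.
\end{equation*}
The interior term is controlled by $c h^2 \|\pi_1 v\|^2_{H^1(\Omega_{int})}$ via the classical first-order Lagrange interpolation estimate on the shape-regular mesh $\mathcal{T}_h^{(i)}$, and the boundary-layer term is controlled by $c h^2 \|\pi_2 v\|^2_{H^1(\Omega_{\mathcal{B}})}$ by Lemma~\ref{lem:boundary-inter}. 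Adding the two yields the claimed $O(h)$ bound in terms of $\|v\|_{H^1(\Omega)}$.

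The principal subtlety, rather than a computational obstacle, is to justify the additive splitting of $\Pi_h$ as a well-defined operator into $V_h \subset H^1(\Omega)$. This rests on the observation already made in Section~\ref{sec:method}: the geometric map $\tilde F_{\mathcal{Q}}$ reduces to the $\mathbb{Q}_1$ Lagrange map on the planar interface because the face blending term vanishes there, so the boundary-layer nodes on the interface coincide with standard Lagrange nodes and $\Pi_1(\pi_1 v)$ and $\Pi_2(\pi_2 v)$ agree there. Once this consistency is noted, everything else is a direct assembly of the pieces already established.
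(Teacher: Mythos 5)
Your proposal is correct and takes essentially the same route as the paper's own proof: split the $L^2(\Omega)$ norm additively over $\Omega_{int}$ and $\Omega_{\mathcal{B}}$, invoke the classical nodal interpolation estimate on the interior and Lemma~\ref{lem:boundary-inter} on the boundary layer, and sum, with stability following from the stability of $\Pi_1$ and $\Pi_2$. Your closing remark on the consistency of the two interpolants at the planar interface is additional justification the paper leaves implicit, but it does not alter the argument.
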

\begin{proof}
  The stability result follows from the definitions and properties of the interpolants, $\Pi_1$ and $\Pi_2$. And, by Lemma~\ref{lem:boundary-inter} and the approximation properties of the nodal interpolants \cite{brenner}, we have
\begin{align*}
\|v - \Pi_h v\|^2_{L^2(\Omega)}
&= \|v - \Pi_h v\|^2_{L^2(\Omega_{\text{int}})} + \|v - \Pi_h v\|^2_{L^2(\Omega_\mathcal{B})} \\
 &= \|v-(\Pi_1(\pi_1v)\oplus\Pi_2(\pi_2v))\|^2_{L^2(\Omega_{int})}+ \|v-(\Pi_1(\pi_1v)\oplus \Pi_2(\pi_2v))\|^2_{L^2(\Omega_{\mathcal{B}})}\\
 &=\|v - \Pi_1 v\|^2_{L^2(\Omega_{\text{int}})} + \|v - \Pi_2 v\|^2_{L^2(\Omega_\mathcal{B})} \\
&\le  c h^2 |v|^2_{H^1(\Omega_{\text{int}})}+ c h^2 \|v\|^2_{H^1(\Omega_{\mathcal{B}})}  \leq c h^2 \|v\|^2_{H^1(\Omega)}
\end{align*}

    \color{black}
\end{proof}

\begin{lemma}\label{lem:boundary-inter-h1}
 If $v\in H^2(\Omega_{\mathcal{B}})$, then we have 
 \begin{align}
     \|v-\Pi_2 v\|_{H^1(\Omega_{\mathcal{B}})}&\le ch\|v\|_{H^2(\Omega_{\mathcal{B}})},\label{eq:improved_h1_pi2}\\
      \|v-\Pi_2 v\|_{L^2(\Omega_{\mathcal{B}})}&\le ch^2\|v\|_{H^2(\Omega_{\mathcal{B}})}. \label{eq:improved_l2_pi2}
 \end{align}
\end{lemma}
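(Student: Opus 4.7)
The plan is to mirror the strategy used in the proof of Lemma~\ref{lem:boundary-inter} and exploit the fact that the two coefficients in the definition of $\Pi_2$ form a partition of unity, namely $\frac{1-\tilde\zeta}{1+\tilde\zeta}+\frac{2\tilde\zeta}{1+\tilde\zeta}=1$. Using this, I would write, for each $\mathcal{Q}\in\mathcal{T}_h^{(b)}$,
\begin{equation*}
v-\Pi_2 v \;=\; \frac{1-\tilde\zeta}{1+\tilde\zeta}\bigl(v-\Pi_{\mathcal{N}_h}v\bigr)\;+\;\frac{2\tilde\zeta}{1+\tilde\zeta}\bigl(v-\Pi_1^{\mathcal{B}}v\bigr),
\end{equation*}
so that a triangle inequality in the $H^l$-seminorm reduces both bounds to controlling the two individual interpolation errors, with the weights bounded by $1$ in absolute value since $\tilde\zeta\in(0,1)$.

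For the first term, I would invoke Proposition~\ref{thm:epfl31} with $s=2$ and $l\in\{0,1\}$. Since the paper fixes $p:=\min\{p_1,p_2,p_3\}=1$ (the extrusion in the $\zeta$-direction is piecewise linear), the hypothesis $s\le p+1=2$ is satisfied, and the proposition yields both $|v-\Pi_{\mathcal{N}_h}v|_{H^1(\Omega_{\mathcal{B}})}\le ch\|v\|_{H^2(\Omega_{\mathcal{B}})}$ and $\|v-\Pi_{\mathcal{N}_h}v\|_{L^2(\Omega_{\mathcal{B}})}\le ch^2\|v\|_{H^2(\Omega_{\mathcal{B}})}$ after summing over $\mathcal{Q}\in\mathcal{T}_h^{(b)}$.

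For the second term I need standard nodal interpolation estimates of order $h^2$ in $L^2$ and $h$ in $H^1$ for a function in $H^2$. The main obstacle here is that $\Pi_1^{\mathcal{B}}$ acts on boundary-layer elements whose geometric maps $\tilde F_\mathcal{Q}$ are not affine but rather the NURBS-blended maps of \eqref{eq:blendedmap-transformed}. I would handle this by pulling back to the reference cube $\hat{\mathcal{Q}}$, applying the classical Bramble--Hilbert estimate for the $\mathbb{Q}_1$ nodal interpolant on $\hat{\mathcal{Q}}$, and pushing the bound forward using the chain rule together with the regularity assumptions on $S$ and $S^{-1}$ from Section~\ref{sec:transforms}. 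These assumptions ensure that $\tilde F_\mathcal{Q}$ and its inverse have bounded derivatives up to sufficient order, so that standard scaling arguments give $\|v-\Pi_1^{\mathcal{B}}v\|_{L^2(\mathcal{Q})}\le ch_\mathcal{Q}^2|v|_{H^2(\mathcal{Q})}$ and $|v-\Pi_1^{\mathcal{B}}v|_{H^1(\mathcal{Q})}\le ch_\mathcal{Q}|v|_{H^2(\mathcal{Q})}$, with constants that are uniform thanks to shape-regularity.

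Combining these two ingredients, summing the squared local estimates over $\mathcal{Q}\in\mathcal{T}_h^{(b)}$, and using $h=\max h_\mathcal{Q}$ yields \eqref{eq:improved_h1_pi2} and \eqref{eq:improved_l2_pi2} with a single constant $c$ that absorbs the weights $(1-\tilde\zeta)/(1+\tilde\zeta)$ and $2\tilde\zeta/(1+\tilde\zeta)$. The only non-routine ingredient is the justification of the nodal interpolation estimates on curved hexahedra; once that is in place via the pull-back argument, the rest is a straightforward repetition of the algebraic manipulation already carried out in the proof of Lemma~\ref{lem:boundary-inter}.
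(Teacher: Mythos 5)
Your proposal is correct and follows essentially the same route as the paper: the same partition-of-unity decomposition of $v-\Pi_2 v$, the triangle inequality with the weights bounded by one, Proposition~\ref{thm:epfl31} for the NURBS part, and standard nodal estimates for the Lagrange part. The only differences are organizational --- you obtain the $L^2$ bound directly from Proposition~\ref{thm:epfl31} with $l=0$, $s=2$ rather than via Lemma~\ref{lem:boundary-inter}, and you are more explicit than the paper about justifying the $\mathbb{Q}_1$ interpolation estimates on the curved boundary-layer elements via pull-back to $\hat{\mathcal{Q}}$ --- neither of which changes the substance of the argument.
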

\begin{proof}
Using the definition of $\Pi_2$, for every $\mathcal{Q}\in \mathcal{T}_h^{(b)}$ we may write 
\begin{align*}
|v - \Pi_2 v|^2_{H^1(\mathcal{Q})} 
&= \left|v -  \frac{1 - \tilde{\zeta}}{1 + \tilde{\zeta}}\Pi_{\mathcal{N}_h} v - \frac{2\tilde{\zeta}}{1 + \tilde{\zeta}} \Pi_1^{\mathcal{B}}  v \right|^2_{H^1(\mathcal{Q})} \\
&=\left|v - (\frac{1 - \tilde{\zeta}}{1 + \tilde{\zeta}}\Pi_{\mathcal{N}_h} v - \frac{1 - \tilde{\zeta}}{1 + \tilde{\zeta}}v)- \frac{1 - \tilde{\zeta}}{1 + \tilde{\zeta}}v-(\frac{2\tilde{\zeta}}{1 + \tilde{\zeta}} \Pi_1^{\mathcal{B}}  v -\frac{2\tilde{\zeta}}{1 + \tilde{\zeta}}v) -\frac{2\tilde{\zeta}}{1 + \tilde{\zeta}}v \right|^2_{H^1(\mathcal{Q})} \\
&=\left|  (\frac{1 - \tilde{\zeta}}{1 + \tilde{\zeta}} \Pi_{\mathcal{N}_h} v - \frac{1 - \tilde{\zeta}}{1 + \tilde{\zeta}}v)+(\frac{2\tilde{\zeta}}{1 + \tilde{\zeta}} \Pi_1^{\mathcal{B}} v -\frac{2\tilde{\zeta}}{1 + \tilde{\zeta}}v)  \right|^2_{H^1(\mathcal{Q})} \\
&\le (\frac{1 - \tilde{\zeta}}{1 + \tilde{\zeta}} )^2|\Pi_{\mathcal{N}_h} v - v|^2_{H^1(\mathcal{Q})} +(\frac{2\tilde{\zeta}}{1 + \tilde{\zeta}})^2 | \Pi_1^{\mathcal{B}}v -v|^2_{H^1(\mathcal{Q})} \\&\le |\Pi_1^{\mathcal{B}} v -v|^2_{H^1(\mathcal{Q})}+|\Pi_{\mathcal{N}_h} v - v|^2_{H^1(\mathcal{Q})}
\end{align*}

\noindent Since $v\in H^2(\Omega_{\mathcal{B}})$, we can set $l=1$, $s=2$, and $p=1$ in Proposition~\ref{thm:epfl31}. Then, summing over $\mathcal{Q}\in \mathcal{T}_h^{(b)}$ yields
\begin{align}\label{eq:interop-h1est}
    |v-\Pi_2 v|^2_{H^1(\Omega_{\mathcal{B}})}
\le  c h^2 \|v\|^2_{H^2(\Omega_{\mathcal{B}})} 
\end{align}
Using Lemma~\ref{lem:boundary-inter} and \eqref{eq:interop-h1est}, we deduce that
\begin{align*}
    \|v-\Pi_2v\|^2_{H^1(\Omega_{\mathcal{B}})}&=\|v-\Pi_2v\|^2_{L^2(\Omega_{\mathcal{B}})}+|v-\Pi_2v|^2_{H^1(\Omega_{\mathcal{B}})}\\
   & \le ch^2\big(\|v\|^2_{H^1(\Omega_{\mathcal{B}})}+\|v\|^2_{H^2(\Omega_{\mathcal{B}})}\big)\le ch^2 \|v\|^2_{H^2(\Omega_{\mathcal{B}})}
\end{align*}
This completes the proof of \eqref{eq:improved_h1_pi2}. \eqref{eq:improved_l2_pi2} is derived similarly.
\end{proof}
\color{black}
\begin{corollary}\label{cor:global-interop-est}
 If $v\in H^2(\Omega)$, then we have 
 \begin{align*}
     \|v-\Pi_h v\|_{H^1(\Omega)}&\le ch\|v\|_{H^2(\Omega)},\\
   \|v-\Pi_h v\|_{L^2(\Omega)}&\le ch^2\|v\|_{H^2(\Omega)}.
 \end{align*}
\end{corollary}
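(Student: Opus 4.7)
The plan is to exploit the direct-sum decomposition $\Pi_h = \Pi_1 \oplus \Pi_2$ together with the splitting $H^2(\Omega) = H^2(\Omega_{int}) \oplus H^2(\Omega_{\mathcal{B}})$, reducing the global estimate to the two separate estimates on the interior and on the boundary layer. So I would first write
\[
\|v - \Pi_h v\|_{H^1(\Omega)}^2 = \|v - \Pi_1 v\|_{H^1(\Omega_{int})}^2 + \|v - \Pi_2 v\|_{H^1(\Omega_{\mathcal{B}})}^2,
\]
and an analogous identity in the $L^2$ norm, exactly as done in the proof of Corollary~\ref{cor:interop-l2}. This is the key step: once the norms are split across the two subdomains, the tools on each piece are already in hand.

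On $\Omega_{int}$, $\Pi_1$ is the piecewise-linear Lagrange interpolant on a shape-regular, locally quasi-uniform hexahedral mesh, so the classical interpolation theory (e.g.\ \cite{brenner,ciarlet1972,guermond}) yields $\|v - \Pi_1 v\|_{H^1(\Omega_{int})} \le ch\,|v|_{H^2(\Omega_{int})}$ and $\|v - \Pi_1 v\|_{L^2(\Omega_{int})} \le ch^2\,|v|_{H^2(\Omega_{int})}$. On $\Omega_{\mathcal{B}}$, the improved estimates \eqref{eq:improved_h1_pi2} and \eqref{eq:improved_l2_pi2} established in Lemma~\ref{lem:boundary-inter-h1} give $\|v - \Pi_2 v\|_{H^1(\Omega_{\mathcal{B}})} \le ch\|v\|_{H^2(\Omega_{\mathcal{B}})}$ and $\|v - \Pi_2 v\|_{L^2(\Omega_{\mathcal{B}})} \le ch^2\|v\|_{H^2(\Omega_{\mathcal{B}})}$. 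Summing the two contributions and using $\|v\|_{H^2(\Omega_{int})}^2 + \|v\|_{H^2(\Omega_{\mathcal{B}})}^2 = \|v\|_{H^2(\Omega)}^2$ produces the claimed bounds.

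The only non-routine point is to justify that the global quantity $\|v - \Pi_h v\|_{H^1(\Omega)}$ really equals the Pythagorean sum above, i.e.\ that $\Pi_h v$ is a legitimate element of $H^1(\Omega)$ rather than merely a broken function on the two patches. This is where I would invoke the fact, recorded right after the definition of $V_h$ in \eqref{eq:global-fespace} and remarked in Section~\ref{sec:method} (continuity of $F_\mathcal{Q}$ across the planar interior/boundary interface and the conforming nature of the hybrid construction), that $V_h \subset H^1(\Omega)$; combined with the interelement continuity built into $V_h^{int}$ and $V_h^b$ and matching of the interface degrees of freedom, this ensures $\Pi_h v \in H^1(\Omega)$ so that the gradient has no interface singular part and the domain-wise sum of squared $H^1$ seminorms equals the global squared $H^1$ seminorm.

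In summary, the proof is essentially two lines of algebra once the correct ingredients are lined up: the subdomain splitting of $\Pi_h$, the classical Lagrange estimate on $\Omega_{int}$, and Lemma~\ref{lem:boundary-inter-h1} on $\Omega_{\mathcal{B}}$. I expect no real obstacle beyond bookkeeping of the two regions and recalling that $\Pi_h v \in H^1(\Omega)$ so the squared seminorms add.
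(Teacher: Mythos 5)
Your proposal is correct and follows essentially the same route as the paper: split the squared norm over $\Omega_{int}$ and $\Omega_{\mathcal{B}}$ as in Corollary~\ref{cor:interop-l2}, apply the classical nodal interpolation estimates on the interior and Lemma~\ref{lem:boundary-inter-h1} on the boundary layer, and sum. Your additional remark justifying that $\Pi_h v \in H^1(\Omega)$ so that the subdomain seminorms add is a sensible point of care that the paper leaves implicit, but it does not change the argument.
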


\begin{proof}
The result follows from Lemma~\ref{lem:boundary-inter-h1} and the approximation properties of the nodal interpolants\cite{brenner}. As in Corollary~\ref{cor:interop-l2}, we write
    $$ \|v-\Pi_h v\|^2_{H^1(\Omega)}\le ch^2\|v\|^2_{H^2(\Omega_{\mathcal{B}})}+ch^2|v|^2_{H^2(\Omega_{int})} \le ch^2\|v\|^2_{H^2(\Omega)} $$
    Furthermore, by Lemma~\ref{lem:boundary-inter-h1} and the standard approximation theory \cite{brenner}, we obtain
    $$ \|v-\Pi_h v\|^2_{L^2(\Omega)}\le ch^4\|v\|^2_{H^2(\Omega_{\mathcal{B}})}+ch^4|v|^2_{H^2(\Omega_{int})} \le ch^4\|v\|^2_{H^2(\Omega)} $$
\end{proof}

\section{Model Problem}\label{sec:modelprb}
In this section, we illustrate how our method can be applied to a scalar second-order linear elliptic problem in variational form. We first discuss how  theoretical results such as discrete well-posedness and convergence can be evaluated using the finite element spaces and interpolation operators introduced in Sections~\ref{sec:method} and~\ref{sec:stability}. Then, we address the Poisson problem as a representative example.\\
\noindent Let $\Omega\subset \mathbb{R}^3$ be an open, bounded, convex domain with Lipschitz boundary
and $V:= H^1(\Omega)$. 
Consider the weak form of a scalar second-order linear elliptic PDE over $\Omega$ that reads: Find $u \in V$ such that
\begin{align} \label{eq:2ndorder-system}
    a(u,v)&=b(v), \quad \forall v\in V, 
\end{align}
where $a: V \times V\to \mathbb{R}$ is a continuous and coercive bilinear form and $b:V\to \mathbb{R}$ is a continuous linear functional associated with the variational formulation of the PDE. Thus, the problem stated by \eqref{eq:2ndorder-system} is well-posed due to the Lax-Milgram Theorem \cite{brenner}.\\

\noindent Let $\mathcal{V}_h$ be defined as in \eqref{eq:global-fespace}. 
Then, the finite element formulation of \eqref{eq:2ndorder-system} reads:
Find $u_h \in \mathcal{V}_{h}$ such that
\begin{align}\label{eq:2ndorder-system-fem}
    a(u_h,v_h)=b(v_h), \quad \forall v_h\in \mathcal{V}_h.
   \end{align}
 
\noindent As $\mathcal{V}_{h}\subseteq V$, the coercivity and the continuity of $a(\cdot,\cdot)$ over $\mathcal{V}_h$ follows from the continuous case. By adapting Theorem 3.2 in \cite{epfl-interpolant} into our framework, we derive the following result.
\begin{lemma}
Let $u\in H^2(\Omega)$ be the exact solution of the system listed in \eqref{eq:2ndorder-system} and $u_h\in\mathcal{V}_h$ be the discrete solution approximating $u$ via the finite element formulation in \eqref{eq:2ndorder-system-fem}. Then, we have
   \begin{equation*}
    \|u-u_h\|_{H^1(\Omega)}\le  c h \|u\|_{H^2(\Omega)}
\end{equation*}
\end{lemma}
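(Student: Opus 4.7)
The plan is to apply Céa's lemma followed by the global interpolation estimate established in Corollary~\ref{cor:global-interop-est}. Since the text already confirms that $\mathcal{V}_h \subseteq V = H^1(\Omega)$ and that $a(\cdot,\cdot)$ inherits its coercivity and continuity over $\mathcal{V}_h$ from the continuous problem, the discretisation is conforming, and Galerkin orthogonality $a(u-u_h,v_h)=0$ for all $v_h\in \mathcal{V}_h$ holds. This sets up a textbook best-approximation argument.

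First, I would invoke Céa's lemma: combining coercivity (with constant $\alpha>0$) and continuity (with constant $M>0$) of $a(\cdot,\cdot)$ with Galerkin orthogonality yields
\begin{equation*}
\|u - u_h\|_{H^1(\Omega)} \;\le\; \frac{M}{\alpha} \inf_{v_h \in \mathcal{V}_h} \|u - v_h\|_{H^1(\Omega)}.
\end{equation*}
Second, since the infimum is bounded above by the value at any particular choice, I would take $v_h := \Pi_h u \in \mathcal{V}_h$, where $\Pi_h$ is the hybrid global interpolant defined just before Corollary~\ref{cor:global-interop-est}. This is legitimate because $u\in H^2(\Omega)\subset L^2(\Omega)$, so $\Pi_h u$ is well-defined, and by construction $\Pi_h u \in \mathcal{V}_h = V_h$.

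Third, I would apply Corollary~\ref{cor:global-interop-est}, which provides exactly the bound
\begin{equation*}
\|u - \Pi_h u\|_{H^1(\Omega)} \;\le\; c\, h\, \|u\|_{H^2(\Omega)}.
\end{equation*}
Chaining the two inequalities gives $\|u-u_h\|_{H^1(\Omega)} \le \tfrac{Mc}{\alpha}\, h\, \|u\|_{H^2(\Omega)}$, and absorbing the constants into a new $c$ yields the claimed estimate.

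In truth there is no real obstacle here: the heavy lifting has already been done, namely establishing the approximation property of the hybrid interpolant $\Pi_h$ on both $\Omega_{int}$ and $\Omega_{\mathcal{B}}$ (Lemma~\ref{lem:boundary-inter-h1} and its corollary). The only minor subtlety worth flagging explicitly is the conforming nature of the method: one must verify that $\Pi_h u$ genuinely lies in $\mathcal{V}_h$, which in turn requires the interelement continuity of the hybrid space across the planar interface $\Omega_{int}\cap \Omega_{\mathcal{B}}$, as asserted when $V_h$ was constructed. Once this is noted, the proof reduces to a one-line application of Céa plus the interpolation bound.
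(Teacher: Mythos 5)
Your argument is exactly the paper's: Céa's lemma combined with the choice $v_h=\Pi_h u$ and the bound of Corollary~\ref{cor:global-interop-est}. The extra remarks on Galerkin orthogonality and conformity are consistent with the text and do not change the route.
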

\begin{proof}
By Corollary~\ref{cor:global-interop-est} and Cea's lemma~\cite{ciarlett}, we obtain the result as follows:
    \begin{equation*}
    \|u-u_h\|_{H^1(\Omega)}\le C\inf_{v_h\in \mathcal{V}_h} \|u-v_h\|_{H^1(\Omega)} \le \|u-\Pi_hu\|_{H^1(\Omega)} \le c h \|u\|_{H^2(\Omega)}
\end{equation*}
\end{proof}
\noindent The error in $L^2$-norm can be computed using the standard Aubin-Nitsche duality argument.
\noindent As an example, consider the Poisson's problem given by: 
\begin{align}\label{eq:poisson}
    -\Delta u&= f,\quad \text{in}\ \Omega,\\
    u&=0, \quad \text{on}\ \partial \Omega, \notag
\end{align}
where $\Delta$ is the Laplace operator and $f \in H^{-1}(\Omega)$. The weak form of \eqref{eq:poisson} is obtained by using \eqref{eq:poisson-form} and \eqref{eq:poisson-functional} below in \eqref{eq:2ndorder-system}
\begin{align}
   a(u,v)&:=\int_{\Omega} \nabla u\ \nabla v\ dx, \label{eq:poisson-form}\\
   b(v)&:= \int_{\Omega}f\ v\ dx, \label{eq:poisson-functional}
   \end{align}
  for all $v\in H^1_0(\Omega)$. Note that, in this case, $V=:H_0^1(\Omega)$, thus, $\mathcal{V}_h:=V_h^0$ where $V_h^0$ is defined as in \eqref{eq:global-fespace-bc}.
  
\subsection{Numerical Experiments}
In this section, we apply the proposed NURBS-enhanced finite element method to \eqref{eq:poisson}. Assume that $\Omega$ is the three-dimensional domain obtained by deforming the top boundary of the unit cube while keeping the other boundary faces unchanged. Thus, it is defined as follows:
\[
\Omega = \{(x,y,z)\in\mathbb{R}^3 \;|\; (x,y)\in[0,1]^2,\; 0 \le z \le z_{\mathrm{top}}(x,y)\},
\]
where $z_{\mathrm{top}}$ denotes the deformed face represented by a NURBS surface.\\
\indent Let $z_{\mathrm{top}}$ be a parabolic surface. Note that such domains are widely used in engineering applications, including flows in curved channels and transport phenomena near solid interfaces. However, we do not focus on specific applications in this manuscript. Instead, we present experimental results for our model problem to assess the robustness of the proposed method and provide supporting evidence for the theoretical results. \\ 
\indent In the experiments below, we use the $8$-point Gauss–Legendre quadrature rule for the numerical integrations over the interior elements and the version of our blended quadrature rule mentioned in Remark~\ref{rem:blended_quad_4} for the numerical integrations over the NURBS-enhanced elements.\\
In each experiment, we employ a manufactured solution $u_{exact}$ to define the source function $f$ and to ensure the satisfaction of the homogeneous Dirichlet boundary condition. Precisely, we let $z_{\text{top}}(x,y)=1 + b\,x(1-x)\,y(1-y)$, where $b$ is a curvature parameter, and define $u_{exact}$ as follows:
\[
u_{exact}(x,y,z)
=
x(1-x)\,y(1-y)
z\left(
\frac{1}{z_{\text{top}}(x,y)}
-
\frac{z}{z_{\text{top}}(x,y)^2}
\right).
\]
\noindent For brevity, we will use NEFEM-Hex to indicate the NURBS-enhanced finite element method that we propose in the rest of this section.
\FloatBarrier
\begin{figure}[H]
    \centering
    \begin{subfigure}[t]{0.48\textwidth}
        \centering
        \includegraphics[width=\textwidth]{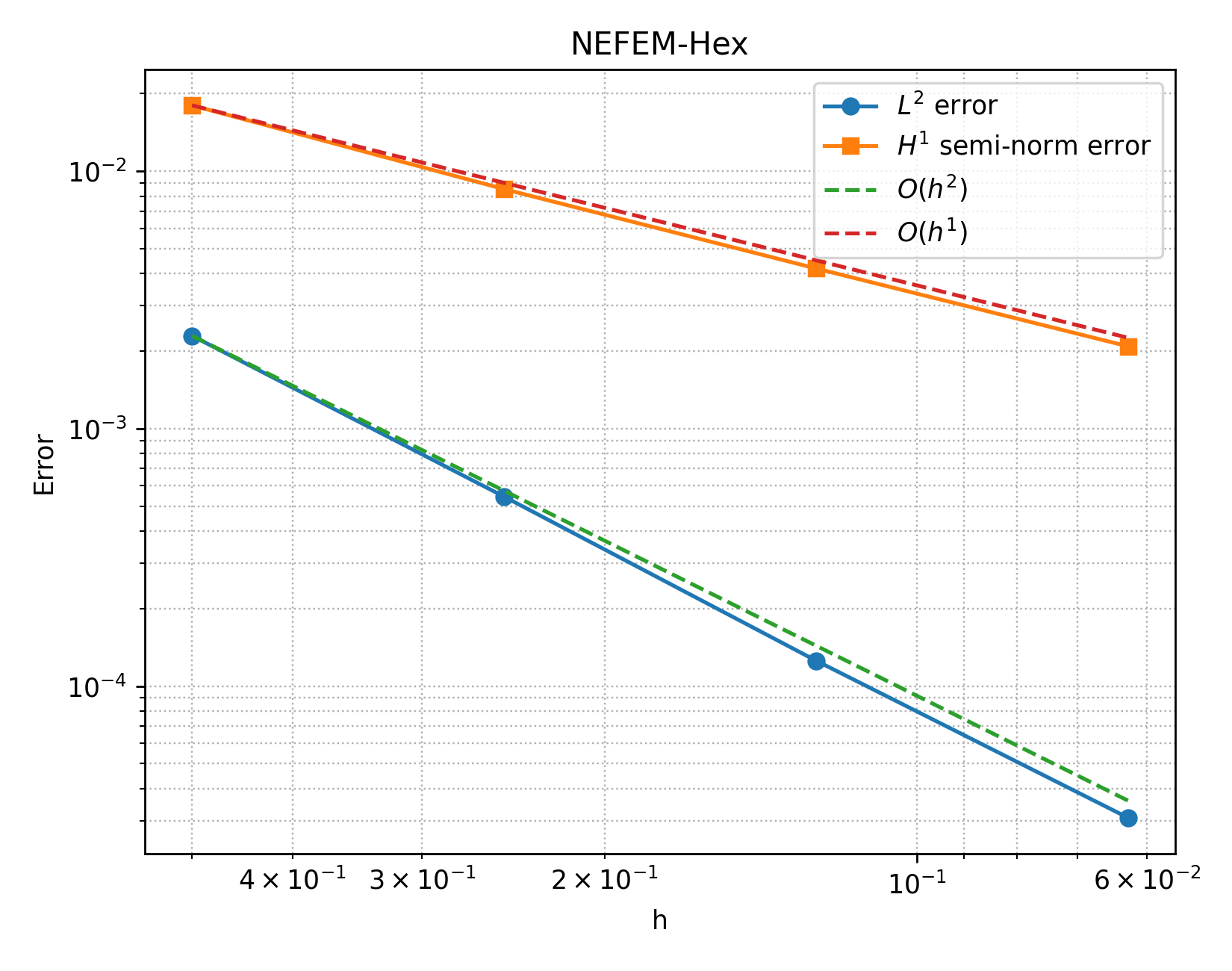}
        \caption{Flat domain, $b=0$}
        \label{fig:conv_b0}
    \end{subfigure}
    \hfill
    \begin{subfigure}[t]{0.48\textwidth}
        \centering
        \includegraphics[width=\textwidth]{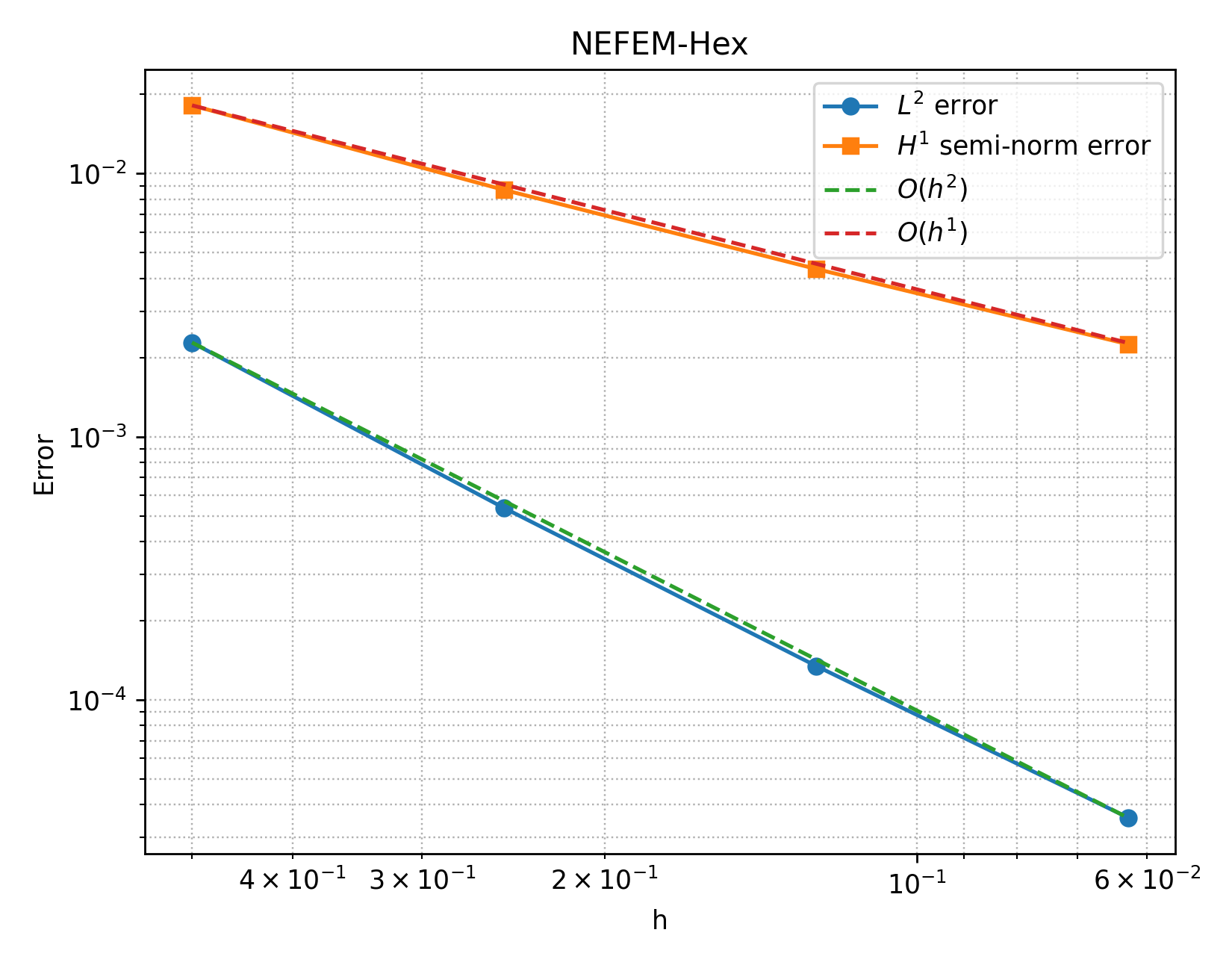}
        \caption{Mildly curved domain, $b=1$}
        \label{fig:conv_b1}
    \end{subfigure}
    \caption{Convergence rates for the Poisson problem on the unit cube with flat and mildly-curved top faces.}
    \label{fig:conv_flat_mild}
\end{figure}

\FloatBarrier
\begin{figure}[H]
    \centering
    \begin{subfigure}[t]{0.48\textwidth}
        \centering
        \includegraphics[width=\textwidth]{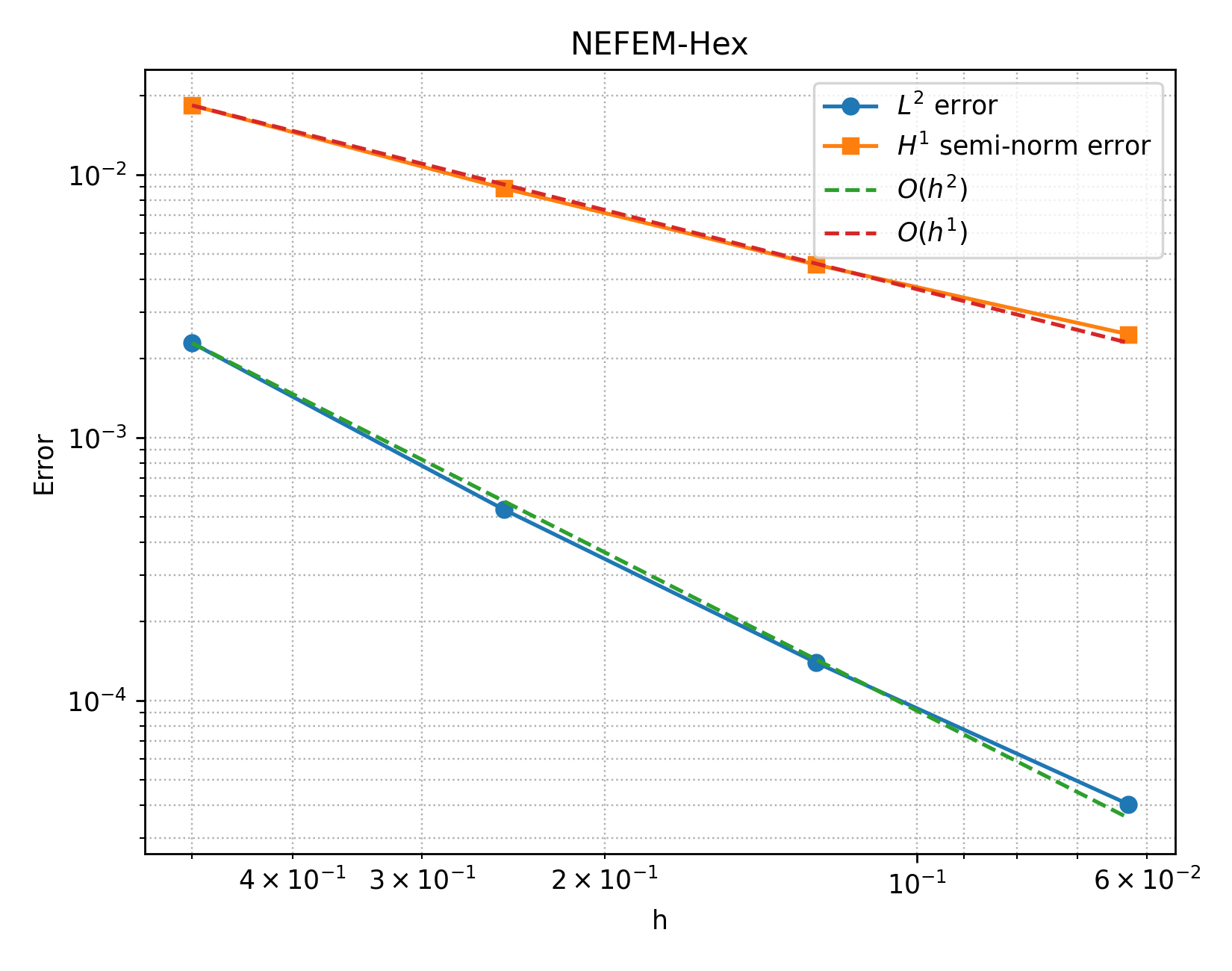}
        \caption{Moderately curved domain, $b=1.5$}
        \label{fig:conv_b0}
    \end{subfigure}
    \hfill
    \begin{subfigure}[t]{0.48\textwidth}
        \centering
        \includegraphics[width=\textwidth]{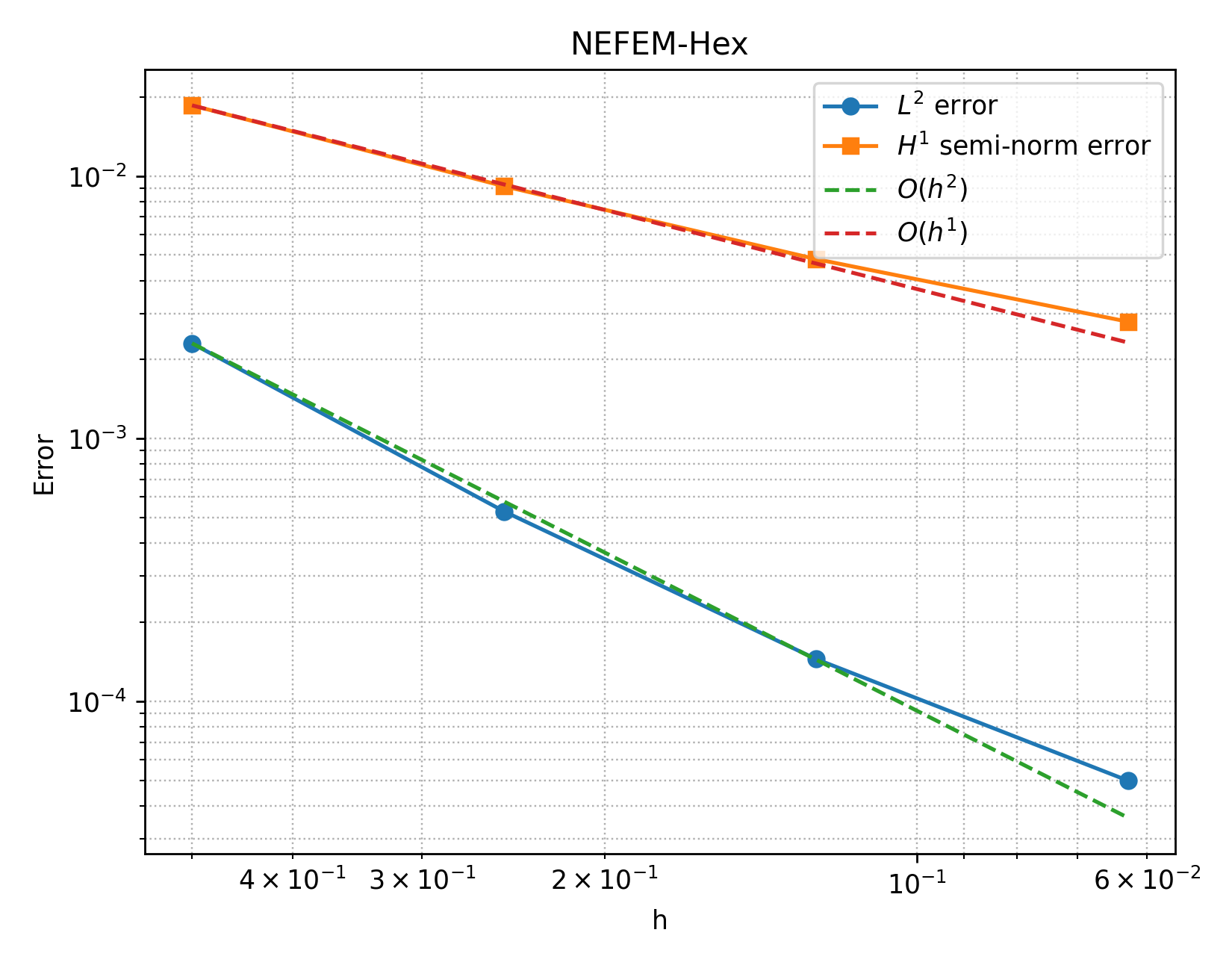}
        \caption{Strongly curved domain, $b=2$}
        \label{fig:conv_b1}
    \end{subfigure}
    \caption{Convergence rates for the Poisson problem on the unit cube with moderately and strongly-curved top faces.}
    \label{fig:conv_flat_mild}
\end{figure}
\noindent Table~\ref{tab:err_table_b0_b1} presents the approximation errors of NEFEM-Hex for the Poisson problem over the unit cube with flat and mildly-curved top faces, whereas Table~\ref{tab:err_table_b15_b2} presents the approximation errors of NEFEM-Hex for the Poisson problem over the unit cube with mildly and strongly-curved top faces. 
\FloatBarrier
\begin{table}[ht]
\centering
\setlength{\arrayrulewidth}{0.4pt} 
\renewcommand{\arraystretch}{1.1}  

\begin{tabular*}{\textwidth}{@{\extracolsep{\fill}}|c|c|c!{\vrule width 1.5pt}c|c|c|}
\hline
\multicolumn{3}{|c!{\vrule width 1.5pt}}{(i)}
& \multicolumn{3}{c|}{(ii)} \\ \hline
h & $L^2$ error & $H^1$ error & h & $L^2$ error & $H^1$ error \\ \hline
0.5000 & 2.294535e-03 & 1.801403e-02 & 0.5000 & 2.280504e-03  & 1.816409e-02 \\ \hline
0.2500 & 5.444017e-04 & 8.515521e-03  & 0.2500 & 5.361970e-04 & 8.667953e-03 \\ \hline
0.1250 & 1.256608e-04 & 4.187026e-03 & 0.1250 & 1.346189e-04 & 4.340239e-03  \\ \hline
0.0625  & 3.078114e-05 & 2.085859e-03  & 0.0625 & 3.567230e-05  & 2.246357e-03  \\ \hline
\end{tabular*}
\caption{Approximation errors for the Poisson problem over the unit cube with (i) flat and (ii) mildly-curved top faces.}
\label{tab:err_table_b0_b1}
\end{table}
\FloatBarrier
\begin{table}[ht]
\centering
\setlength{\arrayrulewidth}{0.4pt} 
\renewcommand{\arraystretch}{1.1}  
\begin{tabular*}{\textwidth}{@{\extracolsep{\fill}}|c|c|c!{\vrule width 1.5pt}c|c|c|}
\hline
\multicolumn{3}{|c!{\vrule width 1.5pt}}{(i)}
& \multicolumn{3}{c|}{(ii)} \\ \hline
h & $L^2$ error & $H^1$ error & h & $L^2$ error & $H^1$ error \\ \hline
0.5000 & 2.289051e-03 & 1.833709e-02 & 0.5000 &  2.306175e-03 & 1.856369e-02 \\ \hline
0.2500 & 5.319569e-04 & 8.864501e-03 & 0.2500 &  5.287346e-04 & 9.135427e-03  \\ \hline
0.1250 & 1.393158e-04 & 4.542358e-03 & 0.1250 &  1.449148e-04 & 4.823532e-03  \\ \hline
0.0625  & 4.032935e-05& 2.470708e-03 & 0.0625 &   4.989741e-05&  2.785766e-03\\ \hline
\end{tabular*}
\caption{Approximation errors for the Poisson problem over the unit cube with (i) moderately and (ii) strongly-curved top faces.}
\label{tab:err_table_b15_b2}
\end{table}

\subsection{Discussion}
As predicted by the theoretical analysis presented in the previous sections, the numerical experiments demonstrate that NEFEM-Hex exhibits optimal convergence both in the $H^1$ norm and in the $L^2$ norm for flat and mildly curved geometries, comparable to those the underlying $\mathbb{Q}_1$ finite element method would yield, while accurately representing the exact domain geometry. On a more strongly curved domain, the method maintains stable convergence behavior with $\mathcal{O}(h)$ convergence in the $H^1$ norm, and $\mathcal{O}(h^2)$ convergence in the $L^2$ norm with a slight deviation observed towards the finest mesh refinement.\\
\indent In addition, element-level diagnostics show that the moment-fitting system has full column rank and is satisfied with the resulting set of weights, achieving good precision. In particular, we observe that $\|Aw-b\|=\mathcal{O}(10^{-6})$. The resulting numerical solutions exhibit the same global accuracy and convergence behavior as those obtained using standard Gauss-Legendre quadrature rules. The blended quadrature rule is introduced primarily to emphasize that the integration of hybrid basis functions naturally motivates the derivation of a special quadrature rule. The systematic design of enhanced quadrature rules capable of accurately treating integrands involving differentials of hybrid basis functions is beyond the scope of this manuscript and will be addressed in future work focusing on quadrature design. \\
\indent The results support the theory and illustrate that the optimal convergence behavior of the underlying finite element method can be maintained while preserving the exact geometric representation of curved domains with the proposed method. Therefore, they confirm that exact boundary representations can be integrated into low-order finite element frameworks without compromising accuracy since the proposed method does not introduce any additional degradation that can be attributed to either the NURBS enhancement or the blended quadrature rule. The proposed method may be utilized in large-scale engineering simulations where low-order methods are still preferred due to their robustness and computational efficiency.

\section{Conclusion} \label{sec:conclusion}
\noindent In this manuscript, we proposed a NURBS-enhanced finite element method that integrates the NURBS boundary representation of a geometric domain into a standard finite element framework over hexahedral meshes. The proposed methodology combines the efficiency of finite element analysis with the geometric precision of NURBS, and may enable more accurate and efficient simulations over curved geometries.\\ 
\indent We considered the decomposition of a 3D domain with NURBS boundary into two parts: boundary layer and interior region. We defined NURBS-enhanced finite elements for the boundary layer of the domain and employed piecewise-linear Lagrange finite elements in the interior region of the domain. We introduced an interpolation operator for the NURBS-enhanced finite elements and derived its approximation properties. In addition,  we introduced a special quadrature rule for evaluating the integrals over the NURBS-enhanced finite elements. We also briefly discussed how h-refinement in finite element analysis and knot insertion in isogeometric analysis could be used in sync while preserving the hybrid finite element structure. Moreover, we described how our methodology can be applied to a generic scalar second-order linear elliptic boundary value problem and derived a priori error estimates. Finally, we provided numerical results using the Poisson problem as a model problem over curved domains, where a portion of the domain boundary was represented by a NURBS surface with varying curvature.\\ 
\indent We note that while the underlying $\mathbb{Q}_1$ finite element method relies on a piecewise planar approximation of the boundary, the proposed NURBS-enhanced finite element method employs an exact representation of the domain geometry. The numerical results demonstrate that NEFEM-Hex exhibits the expected approximation properties and convergence behavior consistent with the theoretical predictions. Overall, these results demonstrate that exact geometric representations can be incorporated into low-order finite element frameworks without introducing additional sources of error or compromising convergence. This supports the use of NEFEM-Hex in large-scale engineering simulations, where low-order methods remain attractive due to their robustness and computational efficiency. \\
\indent In future research, we will focus on the practical aspects of the proposed methodology, particularly its application to specific problems governed by second-order elliptic PDEs and the use of the proposed approach to enhance higher-order finite element methods. We will also conduct research on the adaptations of the blended quadrature rule for more general settings. Other promising research directions include extending the approach to general curved hexahedral meshes, incorporating local mesh refinement techniques, and developing strategies for handling singularities.
\section*{Acknowledgments}
The author conducted most of this research while partially funded by the Engineering and Physical Sciences Research Council (EPSRC) through grants EP/S005072/1 and EP/R029423/1. She is grateful to Nik Petrinic from the Department of Engineering Science at the University of Oxford for valuable discussions on the applications of NURBS-enhanced finite element methods and the limitations of alternative numerical approaches in solid mechanics. 

\bibliographystyle{abbrv}
\bibliography{references}{}

\end{document}